\newcommand{\si}{\sigma}
\newcommand{\Si}{\Sigma}
\newcommand{\bC}{\mathbb{C}}
\newcommand{\bF}{\mathbb{F}}
\newcommand{\bK}{\mathbb{K}}
\newcommand{\bL}{\mathbb{L}}
\newcommand{\bP}{\mathbb{P}}
\newcommand{\bQ}{\mathbb{Q}}
\newcommand{\bR}{\mathbb{R}}
\newcommand{\bT}{\mathbb{T}}
\newcommand{\bZ}{\mathbb{Z}}
\newcommand{\bSi}{{\mathbf\Si}}
\newcommand{\bGamma}{{\boldsymbol{\Gamma}}}
\newcommand{\cA}{\mathcal{A}}
\newcommand{\cD}{\mathcal{D}}
\newcommand{\cE}{\mathcal{E}}
\newcommand{\cF}{\mathcal{F}}
\newcommand{\cI}{\mathcal{I}}
\newcommand{\cL}{\mathcal{L}}
\newcommand{\cM}{\mathcal{M}}
\newcommand{\cO}{\mathcal{O}}
\newcommand{\cT}{\mathbb{T}}
\newcommand{\cU}{\mathcal{U}}
\newcommand{\cV}{\mathcal{V}}
\newcommand{\cX}{\mathcal{X}}
\newcommand{\cIX}{\mathcal{IX}}
\newcommand{\cY}{\mathcal{Y}}
\newcommand{\cZ}{\mathcal{Z}}
\newcommand{\fp}{\mathfrak{p}}
\newcommand{\Fuk}{\mathrm{Fuk}}
\newcommand{\Hom}{\mathrm{Hom}}
\newcommand{\Coh}{\mathrm{Coh}}
\newcommand{\age}{\mathrm{age}}
\newcommand{\eff}{{\mathrm{eff}}}
\newcommand{\SYZ}{{\mathrm{SYZ}}}
\newcommand{\orb}{\mathrm{CR}}
\newcommand{\pr}{\mathrm{pr}}
\newcommand{\inv}{\mathrm{inv}}
\newcommand{\ev}{\mathrm{ev}}
\newcommand{\Pic}{ {\mathrm{Pic}} }
\newcommand{\Ker}{\mathrm{Ker}}
\newcommand{\Sh}{\mathrm{Sh}}
\newcommand{\Spec}{\mathrm{Spec}}
\newcommand{\vir}{{\mathrm{vir}}}
\newcommand{\CC}{\mathrm{CC}}
\newcommand{\CR}{  {\mathrm{CR}}  }
\newcommand{\Nef}{ {\mathrm{Nef}} }
\newcommand{\NE}{ {\mathrm{NE}} }
\newcommand{\BoxS}{\mathrm{Box}(\mathbf \Si)}
\newcommand{\Boxs}{\mathrm{Box}(\si)}
\newcommand{\one}{\mathbf{1}}
\newcommand{\btau}{{\boldsymbol\tau }}
\newcommand{\tC}{ {\widetilde{C}} }
\newcommand{\tbT}{ {\widetilde{\bT}} }
\newcommand{\tcD}{ {\widetilde{\mathcal{D}}}}
\newcommand{\tW}{{\widetilde{W}}}
\newcommand{\tb}{{\widetilde{b}}}
\newcommand{\tM}{{\widetilde{M}}}
\newcommand{\tN}{{\widetilde{N}}}
\newcommand{\tV}{{\widetilde{V}}}
\newcommand{\tw}{\widetilde{w}}
\newcommand{\tuw}{\widetilde{\uw}}
\newcommand{\tcY}{\widetilde{\cY}}
\newcommand{\tGamma}{{\widetilde{\Gamma}}}
\newcommand{\tch}{ {\widetilde{\mathrm{ch}}}}
\newcommand{\tNef}{\widetilde{\Nef}}
\newcommand{\tNE}{\widetilde{\NE}}
\newcommand{\vl}{{\vec{l}}}
\newcommand{\IX}{\mathcal{IX}}
\newcommand{\uchi}{{\underline{\chi}}}
\newcommand{\lra}{\longrightarrow}
\newcommand{\pt}{\mathrm{point}}
\newcommand{\uw}{\mathsf{w}}
\renewcommand{\SS}{\mathrm{SS}}
\newtheorem*{Theorem*}{Theorem}
\newtheorem{dummy}{dummy}[section]
\newtheorem{lemma}[dummy]{Lemma}
\newtheorem{theorem}[dummy]{Theorem}
\newtheorem{proposition}[dummy]{Proposition}
\theoremstyle{definition}
\newtheorem{definition}[dummy]{Definition}
\newtheorem{remark}[dummy]{Remark}
\newtheorem{example}[dummy]{Example}
\newtheorem{assumption}[dummy]{Assumption}
\begin{document}
\title{Central charges of T-dual branes for toric varieties}

\address{Bohan Fang, Beijing International Center for Mathematical
  Research, Peking University, 5 Yiheyuan Road, Beijing 100871, China}
\email{bohanfang@gmail.com}
\author{Bohan Fang}
\begin{abstract}
Given any equivariant coherent sheaf $\cL$ on a compact semi-positive
toric orbifold $\cX$, its SYZ T-dual mirror dual is a Lagrangian brane
in the Landau-Ginzburg mirror. We prove the oscillatory integral of
the equivariant superpotential in the Landau
Ginzburg mirror over this Lagrangian brane is the genus-zero $1$-descendant Gromov-Witten potential with a Gamma-type class of
$\cL$ inserted.
\end{abstract}
\maketitle

\section{Introduction}

Mirror symmetry relates Gromov-Witten invariants
to more classical objects such as period integrals. For an
$n$-dimensional toric orbifold $\cX$, its mirror is a Landau-Ginzburg
model $W:(\bC^*)^{n}\to \bC$, where the superpotential $W$ is a
Laurent polynomial.

There are many ways to extract Gromov-Witten invariants from this
B-model, like quantum cohomology \cites{FOOO10,FOOO11,TsWa} and genus-zero descendant
invariants \cites{Gi96a,Gi96b,Gi97,LLY97,LLY99a,LLY99b,CCIT15, CCK15}. In this
paper, we would be particularly interested in what the following integration says about the A-model Gromov-Witten theory:
\begin{equation}
\label{eqn:integral-introduction}
\int_{\Xi} e^{-\frac{W}{z}} \Omega, \quad \Omega=\frac{dX_1}{X_1}\dots\frac{dX_n}{X_n}.
\end{equation}

\subsection{Motivation}
This paper is motivated from several aspects.
\subsubsection{SYZ T-duality and homological mirror symmetry}

Kontsevich's homological mirror symmetry \cite{Kon94,Kon97} relates the bounded derived category of coherent sheaves on a complete toric variety (or orbifold) $\cX$ to the derived Fukaya-Seidel type category of its mirror Landau-Ginzburg model. There is a correspondence
\[
D\mathrm{FS}((\bC^*)^n,W)\cong D^b\Coh(\cX).
\]
Here $\mathrm{FS}$ is a kind of Fukaya-Seidel category, and $\Coh$ is the category of coherent sheaves. This statement is proved using different setups of Fukaya-Seidel categories \cite{Ab09,FLTZ12}.

There are more structures in these categories of branes. There should be a \emph{BPS central charge} or simply \emph{central charge} associated to each brane, either in A or B-model. These central charges play important roles in the stability conditions of Bridgeland \cite{Br07}.

The canonical central charge of a Lagrangian submanifold $\Xi$ in a Calabi-Yau manifold (an A-brane) is the integration of the Calabi-Yau form
\[
\int_{\Xi} \Omega.
\]
The choice of the central charge reflects the \emph{complex} structure of the Calabi-Yau manifold, manifested by the Calabi-Yau form here. On the mirror side the central charge of a B-model brane a.k.a. a coherent sheaf is part of the stability condition which in principle should specify the stringy K\"ahler structure. The definition should involve the quantum correction by Gromov-Witten invariants. In \cite{Ho06}, Hosono prescribes the central charges formula for certain Calabi-Yaus. The formula for a B-brane involves a hypergeometric series which comes from Gamma classes \cite{Ir09,KKP} and mirror symmetry's $I$-function. These central charges for the A and B-branes should be equal under mirror symmetry.

In this paper we will look at the central charges for the mirror pair of a complete semi-positive toric orbifold and its mirror Landau-Ginzburg model. We express the central charge of a brane in $D^b\Coh(\cX)$ as a genus-zero descendant Gromov-Witten potential as in \cite{Ir09}, and prove it is equal to the integral \eqref{eqn:integral-introduction} on the mirror Lagrangian brane in the Landau-Ginzburg model (central charge on the Lagrangian brane).

\subsubsection{Integral structures}

Iritani extensively investigates the oscillatory integral \eqref{eqn:integral-introduction} in \cite{Ir09}. The integral cycles $\Xi\in
H_n(X,\mathrm{Re}(W)\gg 0;\bZ)$ form a B-model
\emph{integral} structure. The genus-zero
Gromov-Witten defines the quantum cohomology $(H_\CR^*(\cX),
\circ_{\btau})$. The quantum D-module is a flat connection on the
trival bundle $H_\CR^*(\cX)\times H_\CR^*(\cX)\to H_\CR^*(\cX)$ with a
parameter $z\in \bC^*$, called the \emph{Dubrovin connection}
\[
\nabla_\alpha=\partial_\alpha+\frac{1}{z}\alpha  \circ_\btau,\quad \alpha\in H_\CR^*(\cX).
\]
This connection can be extended to a flat connection on the trivial
$H^*_\CR(\cX)$-bundle on $H^*_\CR(\cX)\times \bC^*$.
For any $V\in K(\cX)$, one associates a flat section
$\cZ(V)(\btau,z)$. All such $\cZ(V)$ define an
integral lattice in the space of flat sections of $\nabla$.
The B-model D-module and its integral structure is naturally generated by the lattice
$H_n(\cY_q,\mathrm{Re}(W/z)\gg 0;\bZ)$, where $\cY_q\cong (\bC^*)^{\dim\cX}$
is the mirror Landau-Ginzburg model of $\cX$, parametrized by the
complex parameter $q$. The identification of the D-modules also
identifies such integral structures \cite{Ir09}.

As noted in \cite{Ir09}, the correspondence should come from
homological mirror symmetry. Indeed, Iritani
proves the identification of the integral structures by first studying
the case when $V$ is either $\cO_\cX$ or the skyscraper sheaf. In
either case, the corresponding cycle $\Xi$ in Equation
\eqref{eqn:integral-introduction} is the Strominger-Yau-Zaslow dual
to $V$ \cite{AP,LYZ,SYZ}, i.e. cotangent fiber (viewing $\cY_q\cong (\bC^*)^n=T^*(S^1)^n$) or the base $(S^1)^n\subset \cY_q$. This identification follows the theme of homological
mirror symmetry for toric varieties \cite{Ab09,FLTZ11,FLTZ12,FLTZ14}. Let $Z(V)$ be the
integration of $\cZ(V)$ over $\cX$ (pairing with $\cX$'s fundamental
class). The value of $Z(V)$ is directly computed as a descendant
potential
\[
Z(V)=\llangle\frac{\bGamma_z(V)}{z(z+\psi)}\rrangle_{0,1}^\cX,
\]
where $\bGamma_z$ is a transcendental characteristic class involving the Gamma
function. On the B-side, for example when $V$ is the structure sheaf $\cO_\cX$, Iritani obtains the following under the mirror map when $\btau \in H^{\leq 2}_{\orb}(\cX;\bC)$
\[
Z(\cO_\cX)=\int_{\Xi(O_\cX)} e^{-W/z} \Omega.
\]
Here SYZ dual $\Xi(\cO_\cX)$ is $(\bR_{>0})^n\subset
\cY_q\cong(\bC^*)^n$. Homological mirror symmetry says
$\Xi(\cO_\cX)$ is the Lagrangian brane in the Landau-Ginzburg model
mirror to the coherent sheaf $\cO_\cX$ in $D^b\Coh(\cX)$. Moreover, Iritani also matches Galois actions on A and B-sides to generate and match all integral structures, i.e. two ``tensoring a line bundle'' operations on both sides.

In this paper, we develop this correspondence of central charges more carefully. We show
an equivariant version of the above correspondence with an explicit
description of $\Xi(\cL)$ for any $\cL\in D^b\Coh_\bT(\cX)$, the derived category of equivariant coherent sheaves on $\cX$. \footnote{For the cycle mirror to the structure sheaf, Iritani computes $Z(\cO_\cX)$ equivariantly first and then takes non-equivariant limit.}  The
explicit description of $\Xi(\cL)$ is the same as the Lagrangian
brane obtained in the theorem of homological mirror symmetry of
\cite{FLTZ12}.
\subsection{Main theorem}
\begin{Theorem*}
Let $\cX$ be a proper semi-positive toric orbifold, $\tW$ be the
equivariantly perturbed superpotential, which is a holomorphic
function on the universal cover $\tcY_q$ of $\cY_q$ . Under the mirror
map $\btau=\btau(q_0,q)$ (as in \cite{Gi96a,Gi96b,Gi97,LLY97,LLY99a,LLY99b})
\[
\llangle \frac{\kappa(\cL)}{z(z+\psi)}\rrangle_{0,1}^\cX=
\int_{\Xi(\cL)} e^{-\frac{\tW}{z}}\Omega.\]
The symbol $\llangle \rrangle_{0,1}^{\cX}$ is the notation of genus-zero primary and descendant Gromov-Witten potential with $1$ marked point (see Definition \ref{def:gw-potential}). Here $\Xi(\cL)$ is a piecewise linear conical Lagrangian cycle in $\tcY_q$, and is the conical limit of the \emph{equivariant} SYZ T-dual brane of
$\cL$.\footnote{A conical set is a dilation-invariant set in certain
  direction. See Definition \ref{def:conical}.}
\end{Theorem*}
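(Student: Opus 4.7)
The plan is to reduce the identity to a comparison of two hypergeometric $q$-series, one produced by the oscillatory integral and one by the descendant potential, and then to invoke the equivariant toric mirror theorem in the spirit used by Iritani \cite{Ir09} for $\cL=\cO_\cX$.

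First I would make $\Xi(\cL)$ explicit. Following the prescription of Fang--Liu--Treumann--Zaslow \cite{FLTZ12}, a $\bT$-equivariant coherent sheaf $\cL$ on $\cX$ determines a constructible sheaf on $(S^1)^n$ whose singular support is a conical Lagrangian; concretely $\Xi(\cL)$ is a union of conical cells indexed by the cones of $\boSi$, with each cell translated by the characters appearing in a localized $\bT$-equivariant Koszul-type resolution of $\cL$ at the corresponding fixed stratum. Lifting to the universal cover $\tcY_q$ is needed so that the multivalued perturbation $\tW$, which contains logarithmic terms in the equivariant parameters, becomes single-valued.

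Second, I would compute $\int_{\Xi(\cL)}e^{-\tW/z}\Omega$ by stratifying $\Xi(\cL)$ along its conical cells and evaluating each piece as an iterated Mellin--Barnes integral. Along the directions tangent to a cell, $\tW$ controls the exponential decay at infinity; integration in the transverse directions yields a product of $\Gamma$-functions in the equivariant weights and in the characters of $\cL$. Summing the resulting lattice of residues produces a series which I would recognize as the pairing of the equivariant $I$-function $I^\cX(q,z)$ of $\cX$ with a Gamma-type factor that is $\kappa(\cL)$ on the nose.

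Third, under the semi-positivity hypothesis the equivariant toric mirror theorem (Givental, Lian--Liu--Yau, Coates--Corti--Iritani--Tseng, Brown) identifies $I^\cX(q,z)$ with the $J$-function $J^\cX(\btau(q_0,q),z)$ after the mirror map; unwinding the definition of the genus-zero one-point descendant potential $\llangle -/(z(z+\psi))\rrangle_{0,1}^\cX$ in terms of $J^\cX$ then converts the pairing $\langle \kappa(\cL),J^\cX\rangle$ into the claimed descendant invariant. This is Iritani's strategy promoted from $\cL=\cO_\cX$ to arbitrary equivariant $\cL$.

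The main obstacle is the bookkeeping of the orbifold/equivariant pieces. On the A-side the Chen--Ruan grading and the inertia decomposition $\cIX=\bigsqcup_v \cX_v$ with $v\in\mathrm{Box}(\boSi)$ introduce age shifts in $\kappa(\cL)$, while on the B-side fractional translations of lifted cells in $\tcY_q$ produce fractional exponents in $q$; these two shifts must match sector by sector, and the precise $\Gamma$-product emerging from the Mellin--Barnes residues must be identified with Iritani's formula for $\kappa$ including the weights of $\cL$ on each twisted sector. A secondary analytic subtlety is to justify absolute convergence of the oscillatory integral over the non-compact $\Xi(\cL)$ and the contour deformation that enables the residue evaluation; both rely on $\mathrm{Re}(\tW/z)\to +\infty$ along the asymptotic directions of $\Xi(\cL)$, which the semi-positivity of $\cX$ is used to guarantee.
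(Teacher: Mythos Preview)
Your overall architecture---compute the oscillatory integral, recognize it as the $I$-function paired with $\kappa(\cL)$, then apply the equivariant mirror theorem---matches the paper. But your step~2 is where the arguments diverge, and as written it is a gap rather than merely a different route.

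The paper does \emph{not} stratify $\Xi(\cL)$ into its conical cells and evaluate each as a Mellin--Barnes integral. Individual cells $(-\tau)\times\Delta_\tau(\vl)$ do not give clean or even obviously convergent integrals in isolation; the convergence argument (Proposition~3.2) uses the whole cycle. Instead the paper proceeds in two moves. First, it observes that $I_\Xi$ solves the GKZ system, hence is a linear combination $\sum_{\si,v} h_{\si,v}\, I_{\si,v}(q,-z)$ of pullbacks of the equivariant $I$-function to the torus-fixed points $\fp_{\si,v}\in\cIX$ (Proposition~4.1). Second, it determines the coefficients $h_{\si,v}$ by an asymptotic analysis at the large-radius limit $q\to 0$: for an \emph{ample} $\cL_\vl$ the cycle $\Xi(\cL_\vl)$ is shifted by $tv_0$ with $v_0$ in the interior of a chosen maximal cone $\si$, and as $t\to\infty$ only the vertex $\uchi_\si(\vl)$ contributes, reducing the integral to a product of Gamma integrals over $N_\bR$. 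This is where the $\tGamma_z(T\cX)\tch_z(\cL_\vl)$ factor actually emerges. The general case follows by additivity of characteristic cycles and of $\tch_z$, since $\bQ$-ample line bundles generate $D^b\Coh_\bT(\cX)$. Finally, Atiyah--Bott localization on $\cIX$ reassembles the fixed-point sum into the global pairing $\llangle \kappa(\cL)/(z(z+\psi))\rrangle$.

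Two smaller points: the semi-positivity hypothesis is used only to make the mirror map explicit (so that the mirror theorem takes the simple form $I=J$), not for convergence of the oscillatory integral; and your Koszul-resolution description of $\Xi(\cL)$ is not the one used---the paper works with the characteristic cycle $\CC(\kappa(\cL))$ of the constructible sheaf, which for ample $\cL_\vl$ is the explicit sum \eqref{eqn:char-ample}.
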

\begin{remark}
  The class $\kappa(\cL)$ related to an equivariant version of the Gamma class \eqref{eqn:gamma}.
\end{remark}

\begin{remark}
  \label{rmk:Gamma}
  This theorem should have some implications, at least in principle, on the \emph{Gamma II conjecture} proposed by Galkin-Golyshev-Iritani \cite{GGI16} for semi-positive complete toric orbifolds. A proof of the K-theoretic version is in \cite[p19]{GaIr15}. Such conjecture states when the non-equivariant quantum cohomology is semisimple, like in the complete toric case, the fundamental solutions to the QDEs have certain asymptotic behavior prescribed by the non-equivariant limit of $\kappa$-classes of coherent sheaves in an exceptional collection of $D^b\Coh(\cX)$. The existence of the exceptional collection in the toric case is known \cite{Kaw06}, while recent works \cite{Kuw16, Vai16} allow us to take non-equivariant limit for  coherent-constructible correspondences (see also \cites{ScSi16,Tre10,Kuw15}).
\end{remark}

\begin{remark}
One can simply integrate over the SYZ T-dual brane $\SYZ(\cL)$ instead of
$\Xi(\cL)$ and get the same result since it is
homotopic to and has
the same asymptotic behavior as $\Xi(\cL)$ (see Remark \ref{rem:SYZ}).
\end{remark}
\begin{remark}
In \cite{FLZ15}, Liu, Zong and the author directly compute the
oscillatory integrals when $\cX$ is $\bP^1$. Combining with the
Eynard-Orantin recursion, one can obtain all genus descendant
potentials of $\bP^1$ by integrating the Eynard-Orantin higher genus
invariants over the suitable cycles.
\end{remark}

\subsection{Outline}
We fix the notion of compact semi-positive toric orbifolds in Section
\ref{sec:toric}, and recall some basic facts of homological mirror
symmetry for complete toric orbifolds in Section \ref{sec:hms}. In
Section \ref{sec:oscillatory} we compute the oscillatory integrals by fixing the
coefficients due to the fact that the integral is a solution to the GKZ system. We
prove the main theorem in Section \ref{sec:mirror} by invoking the genus-zero
mirror theorem of Givental and Lian-Liu-Yau  \cite{Gi96a,Gi96b,Gi97,LLY97,LLY99a,LLY99b}.

\subsection{Acknowledgement}
The author would like to thank Chiu-Chu Melissa Liu and Zhengyu Zong
for valuable discussion. The author would also like to thank Hiroshi Iritani and Eric Zaslow for very helpful comments. This work is partially supported by the
Recruitment Program of Global Experts in China and a start-up grant at
Peking University.

\section{Smooth Toric DM Stacks}
\label{sec:toric}
In this section, we follow the definitions in \cite[Section 3.1]{Ir09}, with slightly different notation. We work over $\bC$.

\subsection{Definition}
Let $N\cong \bZ^n$ be a finitely generated free abelian group, and let $N_\bR=N\otimes_\bZ\bR$. We consider complete toric \emph{orbifolds} which have trivial generic stabilizers. A toric orbifold is given by the following data:
\begin{itemize}
\item vectors $b_1,\dots, b_{r'} \in N$. We require the subgroup $\oplus_{i=1}^{r'} \bZ b_i$ is of finite index in $N$.
\item a complete simplicial fan $\Si$ in $N_\bR$ such that the set of $1$-cones is
$$
\{\rho_1,\dots,\rho_{r'}\},
$$
where $\rho_i=\bR_{\ge 0} b_i$, $i=1,\dots, r'$.
\end{itemize}
The datum $\mathbf \Si=(\Si, (b_1,\dots, b_{r'}))$ is the \emph{stacky
  fan} \cite{BCS05}. We choose
\emph{additional} vectors  $b_{r'+1},\dots, b_r$ such that $b_1,\dots, b_r$ generate $N$.
There is a surjective group homomorphism
\begin{eqnarray*}
\phi: & \tN :=\oplus_{i=1}^r \bZ\tb_i & \lra  N,\\
        &  \tb_i  & \mapsto b_i.
\end{eqnarray*}
Define $\bL :=\Ker(\phi) \cong \bZ^k$, where $k:=r-n$. Then
we have the following short exact sequence of finitely generated abelian groups:
\begin{equation}\label{eqn:NtN}
0\to \bL  \stackrel{\psi }{\lra} \tN  \stackrel{\phi}{\lra} N\to 0.
\end{equation}
Applying $ - \otimes_\bZ \bC^*$ and $\Hom(-,\bZ)$ to \eqref{eqn:NtN}, we obtain two exact
sequences of abelian groups:
\begin{align}
\label{eqn:bT}
&1 \to G \to \tbT\to \bT \to 1,\\
&\label{eqn:MtM}
0 \to M \stackrel{\phi^\vee}{\to} \tM \stackrel{\psi^\vee}{\to} \bL^\vee \to 0,
\end{align}
where
\begin{align*}
&\bT= N\otimes_\bZ \bC^* ={N}\otimes_\bZ \bC^* \cong (\bC^*)^n,\  \tbT = \tN\otimes_\bZ \bC^* \cong (\bC^*)^r,\  G = \bL\otimes_\bZ \bC^* \cong (\bC^*)^k,\\
&M = \Hom(N,\bZ)  = \Hom(\bT,\bC^*), \
\tM = \Hom(\tN,\bZ)= \Hom(\tbT,\bC^*),\
\bL^\vee = \Hom(\bL,\bZ) =\Hom(G,\bC^*).
\end{align*}

The action of $\tbT$ on itself extends to a $\tbT$-action on $\bC^r = \Spec\bC[Z_1,\dots, Z_r]$.
The group $G$ acts on $\bC^r$ via the group homomorphism $G\to \tbT$ in \eqref{eqn:bT}.

Define the set of ``anti-cones''
$$
\cA=\{I\subset \{1,\dots, r\}: \text{$\sum_{i\notin I} \bR_{\ge 0} b_i$ is a cone of $\Si$}\}.
$$
Given $I\in \cA$, let $\bC^I$ be the subvariety of $\bC^r$ defined by the ideal in $\bC[Z_1,\ldots, Z_r]$ generated by $\{ Z_i \mid i\in I\}$.  Define the toric orbifold $\cX$ as the stack quotient
$$
\cX:=[U_\cA/ G],
$$
where
$$
U_\cA:=\bC^r \backslash \bigcup_{I\notin \mathcal A} \bC^I.
$$
$\cX$ contains the torus $\cT:= \tbT/G$ as a dense open subset, and
the $\tbT$-action on $\cU_\Si$ descends to a $\cT$-action on $\cX$.

\subsection{Line bundles and divisors on $\cX$}

Let $\tcD_i$ be the $\tbT$-divisor in $\bC^r$ defined by $Z_i=0$. Then
$\tcD_i \cap \cU_A$ descends to a $\bT$-divisor $\cD_i$ in $\cX$. We have
$$
\tM \cong \Pic_{\tbT}(\bC^r) \cong H^2_{\tbT}(\bC^r;\bZ),
$$
where the second isomorphism is given by the $\tbT$-equivariant
first Chern class $(c_1)_{\tbT}$.  Define\footnote{The minus sign in the definition of $\uw_i$ is a convention for the purporse of matching the B-model oscillatory integrals. See Section \ref{sec:oscillatory}.}
\begin{align*}
\uw_i& = -(c_1)_{\tbT}(\cO_{\bC^r}(\tcD_i)) \in H^2_{\tbT}(\bC^r;\bZ) \cong H^2_{\cT}([\bC^r/G];\bZ)\cong\tM,\\
\bar D_i^{\bT}&=(c_1)_{\bT}(\cO_{\cX}(\cD_i)),\quad \bar D_i^{\tbT}=(c_1)_{\tbT}(\cO_{\cX}(\cD_i)).
\end{align*}
Then $\{-\uw_1,\ldots,-\uw_r\}$ is a $\bZ$-basis of
$H^2_{\tbT}(\bC^r;\bZ)\cong \tM$ dual to the $\bZ$-basis
$\{ \tb_1,\ldots, \tb_r\}$ of $\tN$. Notice that $
\bar{D}_i^\cT =0$ and $\bar{D}_i^{\tbT}=0$ for $i=r'+1,\ldots, r$. We have
\begin{gather*}
\Pic_{\cT}(\cX)\cong H^2_{\cT}(\cX;\bZ) \cong \tM/\oplus_{i=r'+1}^r \bZ D^\cT_i,\\
H^2_{\cT}(\cX;\bZ) = \bigoplus_{i=1}^{r'} \bZ \bar{D}^\cT_i \cong\bZ^{r'}.
\end{gather*}
We also have group isomorphisms
$$
\bL^\vee \cong \Pic_G(\bC^r) \cong H^2_G(\bC^r;\bZ),
$$
where the second isomorphism is given by the $G$-equivariant first Chern class $(c_1)_G$. Notice that a character $\chi \in\bL^\vee$ defines a line bundle on $\cX$
$$
\cL_\chi=\{(z,t)\in \cU_A\times \bC/ (z,t)\sim (g\cdot z,\chi(g)\cdot t),g\in G\},
$$
where $G$ acts on $\cU_A$ as a subgroup of $\tbT$. There is a canonical $\tbT$-action on $\cL_\chi$ which acts diagonally on $\bC^r$ and trivially on the fiber. Define
\begin{gather*}
D_i=(c_1)_G(\cO_{\bC^r}(\tcD_i)) \in H^2_G(\bC^r;\bZ)\cong \bL^\vee,\\
\bar{D}_i=c_1(\cO_{\cX}(\cD_i))\in H^2(\cX;\bZ).
\end{gather*}
We have
$$
\Pic(\cX)\cong H^2(\cX;\bZ) \cong \bL^\vee/\oplus_{i=r'+1}^r \bZ D_i.
$$
The map
$$
\bar{\psi}^\vee: \Pic_{\cT}(\cX)\cong H^2_{\cT}(\cX;\bZ) \to \Pic(\cX)\cong H^2(\cX;\bZ)
$$ is
descends from $\psi^\vee:\Pic_\tbT(\bC^r;\bZ)\cong \tM \to \Pic_G(\bC^r;\bZ)\cong \bL^\vee$ and satisfies
$$
\bar \psi^\vee(\bar{D}_i^\cT)=\bar{D}_i\quad i=1,\ldots, r'.
$$

\subsection{Torus invariant subvarieties and their generic stabilizers}
Let $\Si(d)$ be the set of $d$-dimensional cones. For each $\si\in \Si(d)$,
define
$$
I_\si=\{ i\in \{1,\ldots,r\}\mid \rho_i\not\subset \si \} \in \cA,
\quad I_\si'= \{1,\ldots, r\}\setminus I_\si.
$$
Let $\tilde{V}(\si)\subset U_\cA$ be the closed subvariety defined by the ideal of $\bC[Z_1,\ldots, Z_r]$ generated by
$$
\{Z_i=0\mid \rho_i\subset \si\} = \{ Z_i=0\mid i \in I'_\si\}.
$$
Then $\cV(\si) := [\tV(\si)/G]$ is an $(n-d)$-dimensional $\cT$-invariant closed
subvariety of $\cX= [U_\cA/G]$.

Let
$$
G_\si:= \{ g\in G\mid g\cdot z = z \textup{ for all } z\in \tV(\si)\}. 
$$
Then $G_\si$ is the generic stabilizer of $\cV(\si)$. It is a finite subgroup of $G$.
If $\tau\subset \si$ then $I_\si\subset I_\tau$, so $G_\tau\subset G_\si$. There
are two special cases. If $\si\in \Si(n)$ where $n=\dim_\bC \cX$, then $\fp_\si:= \cV(\si)$ is a $\cT$ fixed point in $\cX$, and
$\fp_\si=BG_\si$.

\subsection{The extended nef cone and the extended Mori cone} \label{sec:nef-NE}
In this paragraph, $\bF=\bQ$, $\bR$, or $\bC$.
Given a finitely generated free abelian group $\Lambda\cong \bZ^m$, define
$\Lambda_\bF= \Lambda\otimes_\bZ \bF \cong \bF^m$.
We have the following short exact sequences of vector spaces ($\otimes
\bF$ with Equation \eqref{eqn:NtN} and \eqref{eqn:MtM}):
\begin{eqnarray*}
&& 0\to \bL_\bF\to \tN_\bF \to N_\bF \to 0,\\
&& 0\to  M_\bF\to \tM_\bF\to \bL^\vee_\bF\to 0.
\end{eqnarray*}

Given a maximal cone $\si\in \Si(n)$, we define
$$
\bK_\si^\vee := \bigoplus_{i\in I_\si }\bZ D_i.
$$
Then $\bK_\si^\vee$ is a sublattice of $\bL^\vee$ of finite index.
We define the {\em extended nef cone} $\tNef_\cX$ as below
$$
\tNef_\si = \sum_{i\in I_\si}\bR_{\geq 0} D_i,\quad \tNef_{\cX}:=\bigcap_{\si\in \Si(n)} \tNef_\si.
$$

The {\em extended $\si$-K\"{a}hler cone} $\tC_\si$ is defined to be the interior of
$\tNef_\si$; the {\em extended K\"{a}hler cone} of $\cX$, $\tC_{\cX}$, is defined
to be the interior of the extended nef cone $\tNef_{\cX}$.

Let $\bK_\si $ be the dual lattice of $\bK_\si^\vee$; it can be viewed as an additive subgroup of $\bL_\bQ$:
$$
\bK_\si =\{ \beta\in \bL_\bQ \mid \langle D, \beta\rangle \in \bZ \  \forall D\in \bK_\si^\vee \},
$$
where $\langle-, -\rangle$ is the natural pairing between
$\bL^\vee_\bQ$ and $\bL_\bQ$. We have $\bK_\si/\bL\cong G$. Define
$$
\bK:= \bigcup_{\si\in \Si(n)} \bK_\si.
$$
Then $\bK$ is a subset (which is not necessarily a subgroup) of $\bL_\bQ$, and $\bL\subset \bK$.

We define the {\em extended Mori cone} $\tNE_\si\subset \bL_\bR$ to be
$$
\tNE_{\cX}:= \bigcup_{\si\in \Si(n)} \tNE_\si,\quad  \tNE_\si=\{ \beta \in \bL_\bR\mid \langle D,\beta\rangle \geq 0 \ \forall D\in \tNef_\si\}.
$$
Finally, we define extended curve classes
$$
\bK_{\eff,\si}:= \bK_\si\cap \tNE_\si,\quad \bK_{\eff}:=
\bK\cap \tNE(\cX)= \bigcup_{\si\in \Si(n)} \bK_{\eff,\si}.
$$

\begin{assumption}[semi-positive (Weakly Fano) condition] \label{semi-positive}
From now on, we assume that we may choose $b_{r'+1},\ldots, b_r$ such that
$\hat \rho:=D_1+\dots + D_r$ is contained in the closure
of the extended K\"ahler cone $\tC_\cX$.
\end{assumption}

\begin{remark}
We make the above assumption  so that the equivariant mirror theorem in \cite{CCIT15,CCK15}
has an explicit mirror map.
\end{remark}

\subsection{The inertia stack and the Chen-Ruan cohomology} \label{sec:CR}
Given $\si\in \Si$, define
$$
\Boxs:=\{ v\in N: {v}=\sum_{i\in I'_\si} c_i {b}_i, \quad 0\leq c_i <1\}.
$$
If $\tau\subset \sigma$ then $I'_\tau\subset I'_\si$, so $\mathrm{Box}(\tau)\subset \Boxs$.


Given a real number $x$, let $\lfloor x \rfloor$ be the greatest integer less than or equal to  $x$,
$\lceil x \rceil$ be the least integer greater or equal to $x$,
and $\{ x\} = x-\lfloor x \rfloor$ is the fractional part of $x$.
Define $v: \bK_\si\to N$ by
\begin{equation}
  \label{eqn:v}
v(\beta)= \sum_{i=1}^r \lceil \langle D_i,\beta\rangle\rceil b_i=\sum_{i\in I'_\si} \{ -\langle D_i,\beta\rangle \}b_i,
\end{equation}
so $v(\beta)\in \Boxs$. Indeed, $v$ induces a bijection $\bK_\si/\bL\cong \Boxs$. 

For any $\tau\in \Si$ there exists $\si\in \Si(n)$ such that
$\tau\subset \si$. The bijection $G_\si \to \Boxs$ restricts
to a bijection $G_\tau\to \mathrm{Box}(\tau)$. Define
$$
\BoxS:=\bigcup_{\si\in \Si}\Boxs =\bigcup_{\si\in\Si(n)}\Boxs.
$$
There is a bijection $\bK/\bL\to \BoxS$. 

Given $v\in \Boxs$, where $\si\in \Si(d)$, define $c_i(v)\in [0,1)\cap \bQ$ by
$$
v= \sum_{i\in I'_\si} c_i(v) b_i.
$$
Suppose that  $k \in G_\si$  corresponds to $v\in \Boxs$ under the
bijection $G_\si\cong\Boxs$, then
$$
\chi_i(k) = \begin{cases}
1, & i\in I_\si,\\
e^{2\pi\sqrt{-1} c_i(v)},& i \in I'_\si.
\end{cases}
$$
Define
$$
\age(k)=\age(v)= \sum_{i\notin I_\si} c_i(v).
$$

Let $IU=\{(z,k)\in U_\cA\times G\mid k\cdot z = z\}$,
and let $G$ acts on $IU$ by $h\cdot(z,k)= (h\cdot z,k)$. The
inertia stack $\cI\cX$ of $\cX$ is defined to be the quotient stack
$$
\cI\cX:= [IU/G].
$$
The inertial stack $\cIX$ comes with a projection map $\pr:\cIX\to \cX$. Note that $(z=(Z_1,\ldots,Z_r), k)\in IU$ if and only if
$$
k\in \bigcup_{\si\in \Si}G_\si \textup{ and }  Z_i=0 \textup{ whenever } \chi_i(k) \neq 1.
$$
So
$$
IU=\bigcup_{v\in \BoxS} U_v,
$$
where
$$
U_v:= \{(Z_1,\ldots, Z_m)\in U_\cA: Z_i=0 \textup{ if } c_i(v) \neq 0\}.
$$
The connected components of $\cI\cX$ are
$$
\{ \cX_v:= [U_v/G] : v\in \BoxS\}.
$$
The involution $IU\to IU$, $(z,k)\mapsto (z,k^{-1})$ induces involutions
$\inv:\cI\cX\to \cI\cX$ and $\inv:\BoxS\to \BoxS$ such that
$\inv(\cX_v)=\cX_{\inv(v)}$. Define the $\bT$-fixed point $\fp_{\si,v}=\pr^{-1}(\fp_\si)\cap \cX_v$.

In the remainder of this subsection, we consider rational cohomology, and
write $H^*(-)$ instead of $H^*(-;\bQ)$. The Chen-Ruan orbifold cohomology, as a vector space, is defined to be \cite{Zas93, CR04}
$$
H^*_\orb (\cX)=\bigoplus_{v\in \BoxS}  H^*(\cX_v)[2\age(v)].
$$
Denote $\mathbf 1_v$ to be the unit in $H^*(\cX_v)$. Then $\mathbf 1_v\in H^{2\age(v)}_\orb(\cX)$. In particular,
$$
H^0_\orb(\cX) =  \bQ \mathbf 1_0.
$$
Since $\cX$ is proper, the orbifold Poincar\'e pairing on $H^*_\orb(\cX)$ is defined as
\begin{equation}\label{eqn:Poincare}
(\alpha,\beta):=\int_{\cI\cX} \alpha\cdot \inv^*(\beta),
\end{equation}
We also have an equivariant pairing on $H^*_{\orb,\bT}(\cX)$:
\begin{equation}\label{eqn:T-Poincare}
(\alpha,\beta)_{\bT} := \int_{\cI\cX_{\bT}} \alpha\cdot \inv^*(\beta),
\end{equation}
where
$$
\int_{ \cI\cX_{\bT}}: H_{\orb,\bT}^*(\cX) \to H_{\bT}^*(\pt) = H^*(B\bT)
$$
is the equivariant pushforward to a point. Here the dot $\cdot$ (sometimes omitted) is the usual product in $H^*(\cIX)$ as the same vector space as $H^*_\orb(\cX)$. The product in the Chen-Ruan cohomology, as defined in \cite{CR04} or for many cases \cite{Zas93}, does not explicitly appear in this paper.


\section{The mirror of a toric orbifold}
\label{sec:hms}
\subsection{Coherent-constructible correspondence and HMS: a quick review}

The equivariant mirror to $\cX$ is a Landau-Ginzburg model $((\bC^*)^n,W)$, where $W:(\bC^*)^n\to \bC$ is the holomorphic superpotential function. To consider the \emph{equivariant} information on $\cX$, one needs to consider the \emph{universal cover} of $(\bC^*)^n$. As a symplectic manifold, this $(\bC^*)^n$ is naturally identified with $\cY=T^*(M_\bR/M)\cong T^*(S^1)^n$, and its universal cover $\bC^n$ is identified with $\tcY=T^*M_\bR=N_\bR\times M_\bR$.

For any $\tau \in \Si$, let $N_\tau$ be the sublattice of $N$
generated by $b_i\in \tau$, and $M_\tau=\Hom(N_\tau,\bZ)$. Define
$\tau^\perp_{\uchi}=\{u\in M_\bR|\langle v,u \rangle=\uchi(v), v\in \tau\}$.
\begin{definition}
\label{def:conical}
Let $U$ be a real manifold. We say a set in $T^*U$ is conical if it is invariant under the
dilation $(v,u)\mapsto (t v,u)$ where $u\in U$, $v\in T^*_vU$ and
$t\in \bR_{>0}$.
\end{definition}
The stacky fan $\bSi$ defines the following conical Lagrangian in $T^*M_\bR=N_\bR\times M_\bR$.
$$
\Lambda_\bSi=\bigcup_{\tau\in\Si} \bigcup_{\uchi\in M_\tau}(-\tau)\times \tau^\perp_{\uchi},
$$
We embed $\rho: T^*M_\bR \hookrightarrow \overline {T^* M_\bR}=\{(v,u)\in
T^*M_\bR|\|v\|\leq 1\}$ by $$(v,u)\mapsto
(\frac{v}{\sqrt{1+\|v\|^2}},u).$$ Here $v\in N_\bR$ and $u\in M_\bR$. The infinity part of $\Lambda_\bSi^\infty=\overline {\Lambda_\bSi}\backslash \Lambda_\bSi$ is piecewise Legendrian in the contact boundary unit sphere bundle $T^\infty M_\bR=\overline{T^*M_\bR} \backslash T^*M_\bR$.

We refer the reader to \cite{FLTZ11,FLTZ12,FLTZ14} for the details of
homological mirror symmetry, and only recall the relevant parts
here. Let $D^b\Coh_\bT(\cX)$ be the derived category of equivariant
coherent sheaves on $\cX$, and let $D^b\Sh_{cc}(M_\bR;\Lambda_\bSi)$ be the
derived category of compactly supported constructible sheaves on
$M_\bR$ whose singular supports are subsets of $\Lambda_\bSi$. Define
$\Fuk(T^*M_\bR;\Lambda_\bSi)$ be the derived zunwrapped Fukaya category (in
the sense of \cite{NZ09,Na09}) of $T^*M_\bR$ of horizontally compactly
supported Lagrangian branes $L$ such that their infinity parts
$L^\infty\subset \Lambda_\bSi^\infty$. The homological mirror symmetry of Fang-Liu-Treumann-Zaslow \cite{FLTZ11, FLTZ12} could be
summarized in the following diagram of quasi-equivalence functors.

\begin{theorem}
There is an exact quasi-equivalent functor $\kappa$ as in the
following diagram. Combined with the microlocalization functor $\mu$,
which is also exact and an equivalence (\cite{NZ09,Na09}),
$\mu\circ\kappa$ is a version of homological mirror symmetry --
$\mu\circ\kappa(\cL)$ is isomorphic to the SYZ transformation
of $\cL$, which takes an equivariant line bundle and produces a
Lagrangian in $\Fuk(T^*M_\bR;\Lambda_\bSi)$ as given in \cite{AP,LYZ}.
$$
D^b\Coh_\bT(\cX)\stackrel{\kappa}{\longrightarrow} D^b\Sh_{cc}(M_\bR;\Lambda_\bSi) \stackrel{\mu}{\longrightarrow} \Fuk(T^*M_\bR;\Lambda_\Si).
$$
\end{theorem}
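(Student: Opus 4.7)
The plan is to establish the equivalence $\kappa$ by constructing it explicitly on a generating subcategory of $D^b\Coh_\bT(\cX)$---namely the full additive subcategory spanned by equivariant line bundles---and then extending by triangulated/dg-enhancement yoga to the whole derived category. Checking the claim then reduces to three largely independent tasks: matching morphism spaces on line bundles (full faithfulness), establishing generation on both sides (essential surjectivity), and identifying the image constructible sheaves, when pushed through Nadler-Zaslow microlocalization $\mu$, with the SYZ-dual Lagrangians of \cite{AP,LYZ}.

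First I would define $\kappa$ on generators. For an equivariant line bundle $\cO(D)$ with equivariant structure specified by a character $\uchi\in\tM$, assign to it a constructible sheaf on $M_\bR$ built from the character polytope of $\cO(D)$. Concretely, on each affine chart corresponding to a maximal cone $\si$ the equivariant local sections form a $\tM$-graded module supported on a translated copy of the dual cone $\si^\vee\subset M_\bR$; gluing these via the \v{C}ech nerve of the toric open cover produces a constructible sheaf $\kappa(\cO(D)\otimes\uchi)$. Direct inspection of the conormals to the closures of these cone-translates verifies $\SS(\kappa(\cO(D)\otimes\uchi))\subset\Lambda_\bSi$.

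Next, verify full faithfulness on the line-bundle subcategory. Compute $\Ext^\bullet_\cT(\cO(D_1)\otimes\uchi_1,\cO(D_2)\otimes\uchi_2)$ via equivariant \v{C}ech cohomology on $\cX$ and match it with $\Hom^\bullet$ in $D^b\Sh_{cc}(M_\bR;\Lambda_\bSi)$ of the corresponding constructible sheaves; the latter reduces, using Verdier duality and the conical structure of $\Lambda_\bSi$, to a combinatorial count of lattice characters inside a shifted polytope. This matching---really a dictionary between toric \v{C}ech cohomology and constructible-sheaf Hom read off the standard stratification of $M_\bR$ by cones in $\Si$---is the technical heart of the argument. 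For generation on the B-side, invoke Kawamata's equivariant exceptional collection \cite{Kaw06} (or a Beilinson-type resolution) to conclude that equivariant line bundles generate $D^b\Coh_\bT(\cX)$; on the A-side, a stratum-by-stratum induction using restriction/extension triangles along cones shows that the images of these line bundles generate $D^b\Sh_{cc}(M_\bR;\Lambda_\bSi)$.

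Finally, to identify $\mu\circ\kappa$ on equivariant line bundles with the SYZ transform, apply Nadler-Zaslow: the microlocalization of the standard-extension constructible sheaf on a polytope-shaped open set is, up to Hamiltonian isotopy, the graph of $d\log|s|$ for a defining section $s$ of the line bundle, and for equivariant line bundles this graph coincides by direct comparison with the AP/LYZ SYZ-dual Lagrangian. The main obstacle---where most of the technical labor lies---is the full-faithfulness comparison in the second step: one must track equivariant weights, cone duality, and signs so that the combinatorial Hom computation on the constructible side reproduces the equivariant sheaf cohomology on $\cX$ cone-by-cone, including all coboundary maps. By contrast, the generation step and the microlocalization-to-SYZ identification are comparatively formal once this dictionary is in place.
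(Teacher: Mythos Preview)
This theorem is not proved in the paper; it is quoted as background. The paper introduces it as ``The homological mirror symmetry of Fang--Liu--Treumann--Zaslow \cite{FLTZ11, FLTZ12} could be summarized in the following diagram,'' and immediately afterward says ``Here we only explain the functor $\kappa$ and $\mu\circ\kappa$ on the object level.'' There is no proof in this paper to compare your proposal against---the result is cited from \cite{FLTZ11,FLTZ12,NZ09,Na09} and the paper only records what $\kappa$ does to a $\bQ$-ample line bundle (namely $\cL_\vl\mapsto i_{\Delta_\vl!}\omega_{\Delta_\vl}$) and what $\mu$ does to standard/costandard sheaves over a polytope.

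Your outline is a reasonable sketch of how the cited references actually establish the equivalence, so in that sense it is not wrong as a strategy. A couple of points of friction with the paper's conventions, though: the paper sends $\cL_\vl$ to the \emph{costandard} sheaf $i_{\Delta_\vl!}\omega_{\Delta_\vl}$, not a standard extension, and $\mu$ of the costandard sheaf is the graph of $-d\log m_{\Delta_\vl}$; your description (``standard-extension constructible sheaf \ldots graph of $d\log|s|$'') is off by this duality. Also, invoking Kawamata's exceptional collection \cite{Kaw06} for generation is heavier than needed---FLTZ uses that $\bQ$-ample line bundles already generate $D^b\Coh_\bT(\cX)$, which is more elementary. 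But the larger issue is simply that you have written a proof where the paper writes a citation.
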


Here we only explain the functor $\kappa$ and $\mu\circ\kappa$ on the
object level. Let $\vl=(l_1,\dots,l_{r'})\in \bZ^{r'}$. We say a line
bundle $\cL$ is $\bQ$-ample if certain postive powers of it is a pullback
of an ample line bundle on the coarse moduli $X$. Given a
$\bT$-equivariant $\bQ$-ample line bundle
$\cL_\vl=\cO_{\cX}(\sum_{i=1}^{r'} l_i \cD_i)$ on $\cX$, one can
associate its moment polytope $$\Delta_\vl=\{u\in M_\bR| \langle u,
b_i \rangle > -l_i\}.$$ The $\bQ$-ampleness ensures that $\Delta_\vl$
is a convex open polytope. The functor $\kappa$ is characterized by
mapping the $\bQ$-ample line bundle $\cL_\vl$ to
$i_{\Delta_\vl!}\omega_{\Delta_\vl}$, where $i_{\Delta_\vl}:
\Delta_\vl \hookrightarrow M_\bR$ is the embedding and
$\omega_{\Delta_\vl}$ is the Verdier dual of the constant sheaf
$\bC_{\Delta_\vl}$ on $\Delta_\vl$. The constructible sheaf $i_{\Delta_\vl*}\bC_{\Delta_\vl}$ is called the
\emph{standard} sheaf over $\Delta_\vl$, while its Verdier dual $i_{\Delta_\vl!}\omega_{\Delta_\vl}$ is called \emph{costandard} sheaf.  The microlocalization functor
$\mu$ takes the standard sheaf $i_{\Delta_\vl*}\bC_{\Delta_\vl}$ to the graph of
$d\log m_{\Delta_\vl}$, where $m_{\Delta_\vl}$ is a function on
${\bar\Delta_\vl}$ with $m_{\Delta_\vl}|_{\partial \Delta_\vl}=0$ and
$m_{\Delta_\vl}|_{\Delta_\vl}>0$. This graph $d\log m_{\Delta_\vl}$ is a  \emph{standard} Lagrangian. The functor also takes the costandard sheaf $i_{\Delta_\vl!}\omega_{\Delta_\vl}$ to a \emph{costandard} Lagrangian which is the graph of $-d\log m_{\Delta_\vl}$.

\begin{figure}[h]
\psfrag{1}{\small $m=x(1-x)$}
\psfrag{2}{\small $\log m = \log x + \log(1-x)$}
\psfrag{3}{\small $d\log m = \frac{dx}{x}-\frac{dx}{1-x}.$}
\includegraphics[scale=0.35]{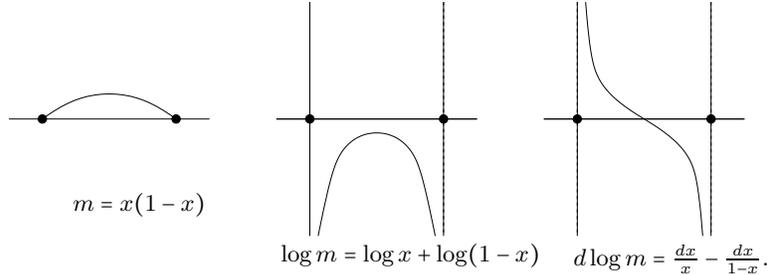}
\caption{The graphs of $m$, $\log m$ and $d\log m$ of an open interval $U\subset \bR$. The graph of $d\log m$ is the standard Lagrangian over $U$. If $\cX=\bP^1$, an ample line bundle $\cO_{\bP^1}(a \cD_1+ b \cD_2)$ corresponds to the costandard sheaf over $(-a,b)$ under $\kappa$. }
\label{fig:standLag}
\end{figure}

\subsection{Landau-Ginzburg B-model and oscillatory integrals}

The symplectic structure on $T^*M_\bR$ only tackles the B-model
information on $\cX$. In order to make predictions on the
Gromov-Witten invariants on $\cX$ one needs to consider the complex
structure on the equivariant mirror Landau-Ginzburg model of $\cX$. In this
subsection, we first define such a Landau-Ginzburg model from the
viewpoint of complex geometry, which should be mirror to the
\emph{A-model} of $\cX$, and then identify it with
$T^*{M_\bR}=N_\bR\times M_\bR$.

We fix an integral basis $e_1,\dots,e_k \in \bL$ and its dual basis
$e_1^\vee,\dots,e_k^\vee$ in $\bL^\vee$. We require that each
$e_a^\vee$ is in $\tNef_\cX$, and $e_{k'}^\vee,\dots,e_k^\vee$ are in
$\sum_{i=r'+1}^r \bR_{\geq 0} D_i$.  Define the \emph{charge vectors}
\[
l^{(a)}=(l_1^{(a)},\dots, l_r^{(a)})\in \bZ^r,\quad
\psi(e_a)=\sum_{i=1}^r l_i^{(a)} \tb_i.
\]
So
\[
D_i=\psi^\vee(D_i^\bT)=\sum_{a=1}^k l_i^{(a)}e_a^\vee,\quad i=1,\dots,r.
\]
Define the Landau-Ginzburg B-model as follows
$$
\cY_q=\{(X_1,\dots,X_r)\in (\bC^*)^r|\prod_{i=1}^r X_i^{l^{(a)}_i}=q_a,\ a=1,\dots,k\}.
$$
Here $q_1,\dots,q_k$ are \emph{complex parameters}. We assume $q_a>0$
for all $a$. Apply the exact functor $\Hom(-,\bC^*)$ to the short exact sequence
\eqref{eqn:NtN} and we get
\[
1\to \Hom(N,\bC^*) \to (\bC^*)^r \stackrel{\mathfrak{p}}{\longrightarrow} \cM=\Hom(\bL,\bC^*)\to 1.
\]
We see that $\cY_q=\mathfrak{p}^{-1}(q)\cong (\bC^*)^k$ is a subtorus in $(\bC^*)^r$. Here $q=(q_1,\dots, q_k)$ are coordinates on $\cM$. For any $\beta\in \bK$, denote $q^\beta=\prod_{a=1}^k q_a^{\langle \beta, e_a^\vee \rangle}$. This notion may involve factional powers as $\beta \in \bK$ in which $\bL$ is a sublattice.

Let $u_1,\dots,u_n$ be the coordinates on $M_\bR$ and $v_1,\dots,v_n$
be the coordinates on $N_\bR$. Let $y_i=-v_i+2\pi\sqrt{-1}u_i$ and
$Y_i=e^{y_i}$. Then $y_1,\dots,y_n$ are complex coordinates on
$\tcY=N_\bR\times M_\bR$, while $Y_1,\dots,Y_n$ are complex coordinates on
$\cY=N_\bR\times M_\bR/M\cong T^*(M_\bR/M)$.

We fix a splitting of the exact sequence \eqref{eqn:NtN} over rational
numbers, i.e. we choose a surjective map $\eta: \tN_\bQ \to \bL_\bQ$ such that
$\eta(\tb_i)=\sum_{a=1}^k \eta_{ia} e_a$ (so $e^\vee_a=\sum_{i=1}^r
\eta_{ia}D_i$) and $\psi\circ\eta=\mathrm{id}$, where $\eta_{ia}\in \bQ$. This splitting identifies
$\cY_q$ with $\cY=\Hom(N,\bC^*)\cong N_\bR\times (M_\bR/M)$ as follows.
\begin{equation}
\label{eqn:B-model-identification}
X_i=q'_iY^{b_i},\quad Y^{b_i}=\prod_{j=1}^n Y_j^{b_{ij}},\quad q_i'=\prod_{a=1}^k q_a^{\eta_{ia}}.
\end{equation}
Here $b_i=(b_{i1},\dots,b_{in})$ is the coordinate of $b_i$ in
$N$. We also identifies $\tcY_q$ with $\tcY=N_\bR \times M_\bR$. We choose
$q_i'>0$ as they may involve fractional powers of $q_a$ in \eqref{eqn:B-model-identification}.

\begin{definition}
The superpotential on $\cY_q$ is
$$
W=\sum_{i=1}^r (t_0+X_i),
$$
where $t_0\in \bC$ is a parameter which plays the role of $H^0$-part on the A-side under the mirror relation. This superpotential depends on
$t_0$ and $q$. We let $q_0=e^{t_0}$. The equivariant superpotential on
$\tcY_q$ is
\[
\tW=W+\sum_{i=1}^r w_i x_i.
\]
Here $x_i=\log X_i$, and the equivariant parameters $w_i\in \bC$.
\end{definition}

Once we choose
\[
x_i=\sum_{a=1}^k \eta_{ia}\log q_a + \sum_{j=1}^n b_{ij} y_j,
\]
the equivariant superpotential $\tW$ is well-defined on $\tcY_q$. It depends on $t_0$,
$\log q_a$ (we can choose them in $\bR$ since all $q_a>0$) and $w_i$.


The following holomorphic form on $\tcY=N_\bR \times M_\bR$
$$
\Omega=\frac{dY_1\dots dY_n}{Y_1\dots Y_n}
$$
is also a holomorphic form on $\tcY_q$ once we identify $\tcY_q$ with
$\tcY$. Let $W_\eta$ and $\tW_\eta$ be the function $W$ and $\tW$ on $\cY$ and $\tcY$ respectively once we fix $\eta$. Consider a partial compactification $(\overline{T^*
  M_\bR})_{\Lambda_\bSi}=T^*M_\bR \cup \Lambda_\bSi^\infty \subset
\overline{T^* M_\bR}$. For any $z>0$ and $$\Xi\in H_n ((\overline{T^*
  M_\bR})_{\Lambda_\bSi}, \Lambda_\bSi^\infty;\bZ),$$ we define the
integral
\begin{equation}
  \label{eqn:oscillatory-integral}
I_\Xi:=\int_\Xi e^{-\frac{\tW_\eta}{z}}\Omega.
\end{equation}
This definition a priori depends on the choice of $\eta$, which gives the identification of
$\tcY_q$ with $\tcY=N_\bR\times M_\bR$. We have the following two propositions.

\begin{proposition}
The integral $I_\Xi$ converges for any $\Xi\in H_n((\overline{T^*
  M_\bR})_{\Lambda_\bSi},\Lambda_\bSi^\infty;\bZ)$ and any choice of $\eta$.
\end{proposition}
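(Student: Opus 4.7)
The strategy is to show that $|e^{-\tW_\eta/z}\Omega|$ tends to zero super-exponentially as $(v,u)$ approaches any point of $\Lambda_\bSi^\infty$, so that the form extends continuously by zero across the added boundary of the partial compactification and hence is integrable over any compactly supported singular $n$-chain. Recall that a singular chain with integer coefficients is automatically a finite sum of simplices, hence compactly supported in $(\overline{T^*M_\bR})_{\Lambda_\bSi}$; in particular the $u$-coordinates on $\Xi$ stay in a bounded set, so convergence is only at issue in the $v\to\infty$ direction, where $\Xi$ accumulates on $\Lambda_\bSi^\infty$. It therefore suffices to prove a uniform asymptotic estimate in a neighborhood of each stratum $((-\tau)^\circ\cap S^{n-1})\times\tau^\perp_{\uchi}$, for $\tau\in\Si$ and $\uchi\in M_\tau$.

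Near such a stratum, parametrize points of $\Xi$ by $v=tv_0$ with $v_0$ in a compact subset of $(-\tau)^\circ\cap S^{n-1}$ and $u$ in a compact neighborhood of $\tau^\perp_{\uchi}$, so $t\to\infty$ is the radial escape. Write $\tW_\eta=r t_0+\sum_i X_i+\sum_i w_i x_i$ with $|X_i|=q'_i e^{-t\beta_i(v_0)}$ and $\arg X_i=2\pi\langle b_i,u\rangle$, where $\beta_i(v)=\sum_j b_{ij}v_j$. The Lagrangian constraint $u\in\tau^\perp_{\uchi}$ imposes $\langle b_i,u\rangle=\uchi(b_i)\in\bZ$ for every $b_i\in N_\tau$ (in particular for all $i\in I'_\tau$), so $\cos(2\pi\langle b_i,u\rangle)=1$ and $\mathrm{Re}\,X_i=|X_i|>0$ for such $i$. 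Completeness of $\Si$ together with Assumption~\ref{semi-positive} then yield an index $i^*\in I'_\tau$ such that $-\beta_{i^*}(v_0)>0$ and strictly exceeds $-\beta_j(v_0)$ for $j\neq i^*$ (uniformly on the compact base). Consequently $\mathrm{Re}\,X_{i^*}\ge c_1 e^{\lambda t}$ with $\lambda>0$, and this exponential term dominates all other $|X_j|\le C e^{\lambda' t}$ with $\lambda'<\lambda$ as well as the $O(t)$ equivariant correction $\sum_i w_i x_i$. Hence $\mathrm{Re}\,\tW_\eta\ge\tfrac12 c_1 e^{\lambda t}$ for $t$ large.

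From this one reads off $|e^{-\tW_\eta/z}|\le\exp(-c_1 e^{\lambda t}/(2|z|))$, while $|\Omega|$ grows at most polynomially in $t$ on the cone; so the integrand decays faster than any negative power of $t$ and extends continuously by zero to $\Lambda_\bSi^\infty$. Summing over the finitely many strata that meet $\Xi$ gives $I_\Xi<\infty$, and the argument is insensitive to the splitting $\eta$ beyond the positivity of the $q'_i$. The main obstacle is the dominance step—that the $\mathrm{argmax}$ of $-\beta_i(v_0)$ over all $i$ falls within $I'_\tau$. This is a polyhedral compatibility between the Newton polytope $\mathrm{conv}\{b_i\}$ (viewed in $M_\bR$ via the basis identification $N\cong M$) and the fan $\Si$, and it is exactly this compatibility that the semi-positivity hypothesis enforces; without it the dominant monomial could sit outside $N_\tau$ with a destructive cosine phase, defeating the naive modulus bound and forcing a subtler oscillatory-integral treatment.
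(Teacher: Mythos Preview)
Your overall strategy matches the paper's: show that $e^{-\tW_\eta/z}\Omega$ extends continuously by zero across $\Lambda_\bSi^\infty$, so that integration over any relative $n$-cycle is finite. You also correctly isolate the delicate point --- on a stratum $(-\tau)^\circ\times\tau^\perp_\chi$ only the phases of the $X_i$ with $i\in I'_\tau$ are pinned to $1$, so one must rule out a dominant $|X_j|$ with $j\in I_\tau$ and uncontrolled sign. But your resolution of this point has a genuine gap: the claim that semi-positivity forces the maximizing index $i^*$ of $-\beta_i(v_0)$ to lie in $I'_\tau$ is false. Take $\cX=F_2$ (smooth, weak Fano) with rays $b_1=(1,0)$, $b_2=(2,1)$, $b_3=(-1,0)$, $b_4=(0,-1)$; for $\tau=\bR_{\ge 0}b_1$ and $v_0=(-1,0)\in(-\tau)^\circ$ one computes $-\beta_i(v_0)=1,2,-1,0$, so the maximum sits at $i=2\notin I'_\tau=\{1\}$, and on $\tau^\perp_\chi$ the phase $2\pi\langle b_2,u\rangle=2\pi(2u_1+u_2)$ is unconstrained. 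No choice of inner product on $N_\bR$ repairs this, since $\langle b_1-b_2,b_1\rangle>0$ and $\langle b_1-b_4,b_1\rangle>0$ cannot both hold (here $b_1-b_2=-(b_1-b_4)$). The final paragraph's ``polyhedral compatibility enforced by semi-positivity'' is unsubstantiated and rests on a non-canonical identification $N\cong M$; it does not close the gap you yourself flagged as the main obstacle.

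The paper does not argue via such a dominance comparison and does not invoke the semi-positive hypothesis in this proof. Instead it records, as part of its description of $\Lambda_\tau$, the constraint $\mathrm{Re}(y^{b_i})=0$ for every $i\in I_\tau$; granting that, each $|X_i|$ with $i\in I_\tau$ stays bounded, so only the real-positive $X_i$ with $i\in I'_\tau$ can grow, and the exponential decay of $e^{-\tW/z}$ near $\Lambda_\tau^\infty$ follows directly.
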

\begin{proof}
For each $\tau\in \Si$ where $\tau$ is spanned by $\rho_i$, $i\in
I_\tau'$, the linear Lagrangian
\[
\Lambda_\tau=\bigcup_{\chi\in M_\tau} (-\tau)\times \tau^\perp_\chi,
\]
is characterized by
\begin{gather*}
\mathrm{Re}(y^{b_i})=-\sum_{j=1}^n b_{ij} v_j\geq 0,\quad
\mathrm{Im}(y^{b_i})=2\pi\sqrt{-1}\sum_{j=1}^n b_{ij} u_j
\in 2\pi\sqrt{-1}\bZ,\quad i\in I_\tau';\\
\mathrm{Re}(y^{b_i})=0, \quad i\in I_\tau.
\end{gather*}
We view $T^*M_\bR=N_\bR\times M_\bR$ as the interior part of the unit ball
bundle $\overline {T^*M_\bR}$ under the embedding map $\rho$. Then $\rho_*e^{-\frac{\tW}{z}}$ is an
analytic function on $\rho(T^*M_\bR)$, and since $z>0$ it exponentially decays to zero
near $\Lambda_\tau^\infty$ -- it can be extended to a neighborhood
$\tilde\Lambda_\tau^{\infty}$ of
$\Lambda_\tau^\infty\subset \overline {T^*M_\bR}$ for each $\tau$,
with value $0$ on $\tilde \Lambda_\tau^{\infty}\setminus \rho(T^*M_\bR)$. The
differential form $\rho_*(e^{-\frac{\tW}{z}}\Omega)$ can also be defined on
this neighborhood. Thus denote
\[
(\overline{T^*M_\bR})_{\Lambda_\bSi}=T^*M_\bR \cup \bigcup_{\tau \in
  \Si} \tilde\Lambda_\tau^{\infty}.
\]
The differential form $e^{-\frac{\tW}{z}}\Omega$ is a closed analytic
  form on this $(\overline{T^*M_\bR})_{\Lambda_\bSi}\subset
  \overline{T^*M_\bR}$, and vanishes on the infinity boundary.

Thus the integral can be evaluated as
\[
I_\Xi=\int_{\overline{\rho(\Xi)}}\rho_*(e^{-\frac{\tW}{z}}\Omega).
\]

\end{proof}

\begin{figure}[h]
\psfrag{1}{\small $\Lambda_\bSi$}
\psfrag{2}{\small $\tilde\Lambda_{\si_2}^\infty$}
\psfrag{3}{\small $\tilde\Lambda_{\si_1}^\infty$}
\includegraphics[scale=0.55]{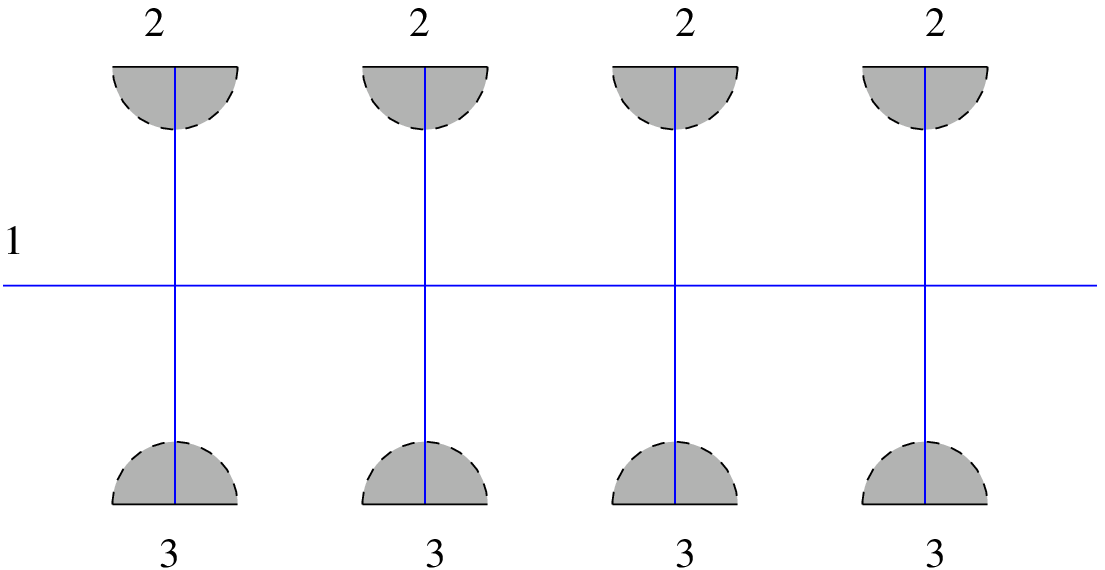}
\caption{$\Lambda_\bSi$ for $\bP^1$ (blue fishbone). Shaded half disks are $\tilde\Lambda_\si^\infty$ on which $\tW$ can be extended. Here two $1$-cones of $\bP^1$ are $\si_1=\bR_{\geq 0}$ and $\si_2=\bR_{\leq 0}$.}
\end{figure}

\begin{proposition}
The integral $I_\Xi$ is does not depend on the choice of $\eta$.
\end{proposition}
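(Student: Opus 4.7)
The plan is to show that two different splittings $\eta, \eta'$ induce identifications $\tcY_q \cong \tcY = N_\bR \times M_\bR$ differing by a real translation in the $N_\bR$-factor, and that the integral is invariant under such translations because $\Lambda_\bSi^\infty$ is preserved at infinity as a set of directions.

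First I would compare two splittings $\eta, \eta' \colon \tN_\bQ \to \bL_\bQ$ of the sequence \eqref{eqn:NtN}. Since each restricts to the identity on $\bL$, the difference $\eta - \eta'$ vanishes on $\psi(\bL) = \ker \phi$ and hence factors through $\phi$ as $s \circ \phi$ for a unique $\bQ$-linear map $s \colon N_\bQ \to \bL_\bQ$. In coordinates, $\eta_{ia} - \eta'_{ia} = \sum_{j=1}^n b_{ij} s_{ja}$, so substituting into $x_i = \sum_a \eta_{ia} \log q_a + \sum_j b_{ij} y_j$ yields $x_i^{\eta'}(y) = x_i^{\eta}(y + c)$, where $c_j := \sum_{a=1}^k s_{ja} \log q_a \in \bR$ assembles into a real vector $c$ in the $N_\bR$-direction of $\tcY$. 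Hence $\tW_{\eta'}(y) = \tW_\eta(y + c)$, and since $\Omega$ is translation-invariant the substitution $y' = y + c$ gives
\[
I_\Xi^{\eta'} = \int_\Xi e^{-\tW_\eta(y+c)/z}\,\Omega(y) = \int_{\Xi + c} e^{-\tW_\eta(y')/z}\,\Omega(y') = I_{\Xi + c}^\eta.
\]
It therefore suffices to show $I_{\Xi+c}^\eta = I_\Xi^\eta$ for any real translation $c$ in $N_\bR$.

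For this, I would use the linear homotopy $H(x,t) := x + tc$, $t \in [0,1]$. The key geometric fact is that translating by a fixed $c \in N_\bR$ does not change the asymptotic direction of a non-zero fiber vector: $(v + tc)/\|v + tc\| \to v/\|v\|$ as $\|v\| \to \infty$. Consequently $H$ extends continuously to the fiberwise compactification $\overline{T^*M_\bR}$ as the identity on the boundary sphere bundle, so it maps $\Xi \times [0,1]$ into $(\overline{T^*M_\bR})_{\Lambda_\bSi}$ while fixing $\partial \Xi \subset \Lambda_\bSi^\infty$ pointwise. The trace $C := H(\Xi \times [0,1])$ is thus an $(n+1)$-chain with $\partial C = (\Xi + c) - \Xi$ modulo degenerate chains in $\Lambda_\bSi^\infty$, and Stokes' theorem gives
\[
I_{\Xi+c}^\eta - I_\Xi^\eta = \int_C d\bigl(e^{-\tW_\eta/z}\Omega\bigr) = 0,
\]
since $e^{-\tW_\eta/z}\Omega$ is a holomorphic top-degree form on the complex $n$-fold $\tcY$, hence $d$-closed as a real form, and it extends smoothly by zero across $\Lambda_\bSi^\infty$ by the convergence argument of the previous proposition, so the infinity face of $C$ contributes nothing. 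The main subtle point is verifying that the homotopy is genuinely continuous at the compactified infinity and that the $\partial \Xi$-face collapses to a degenerate chain in $\Lambda_\bSi^\infty$; both reduce to the elementary limit above, so no further technical work is required.
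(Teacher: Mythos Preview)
Your proposal is correct and follows essentially the same route as the paper: both observe that two splittings differ by a real translation $y \mapsto y + c$ with $c \in \bR^n$ (hence in the $N_\bR$-direction), that $\Omega$ is translation-invariant, and that the linear homotopy $y \mapsto y + tc$ fixes $\Lambda_\bSi^\infty$ so the relative cycle class of $\Xi$ is unchanged. The paper states this last point in one line, while you spell out the Stokes argument and the extension of the homotopy to the fiberwise compactification; this extra detail is fine but not a different approach.
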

\begin{proof}
Let $\eta_1$ and $\eta_2$ be two splittings of the exact sequence
\eqref{eqn:NtN}. Then we have two identifications of $N_\bR\times
M_\bR$ with $\tcY_q$, and two sets of coordiantes $\{y_i^1\}$ and
$\{y_i^2\}$ on $\tcY_q$. By \eqref{eqn:B-model-identification} and the
fact $\eta_{ia}\in\bQ$, they are related by
\[
y_i^1=y_i^2+c_i, \ c_i \in \bR.
\]
The Calabi-Yau form $\Omega$ is invariant under this change of
variables, and the cycle $\Xi$ is also invariant (they are related by
a homotopy $y_i^1=y_i^2+t c_i, t\in [0,1]$, which preserves $\Lambda_\bSi^\infty$).
\end{proof}

Given a sheaf $E\in \Sh_{cc}(M_\bR;\Lambda_\bSi)$, let
\[\CC(E)\in
H_{SS(E)}^0(M_\bR;\pi^{-1} \omega_{M_\bR})\subset
H_{\Lambda_{\bSi}}^0(M_\bR;\pi^{-1}\omega_{M_\bR})\] be its
characteristic cycle. This cycle represents an element in
$H_n((\overline{ T^* M_\bR})_{\Lambda_\bSi},\Lambda_\bSi^\infty;\bZ)$
since $\SS(E)\subset \Lambda^\infty_\bSi$. We still denote it by $\CC(E)$ by a slight abuse of notation. For any coherent sheaf $\cL \in \Coh_\bT(\cX)$, define its mirror cycle to be
\[
\Xi(\cL):=\CC(\kappa(\cL)).
\]

We define the central charge of $E$ as follows.
\begin{definition}[A and B-model central charge]
  \begin{align*}
    I_B(E)&=I_{\CC(E)}=\int_{\CC(E)} e^{-\frac{\tW}{z}} \Omega,\\
    I_A(\cL)&=I_{\Xi(\cL)}=\int_{\CC(\kappa(\cL))} e^{-\frac{\tW}{z}}\Omega.
  \end{align*}
\end{definition}
\begin{proposition}
\label{prop:integral-additivity}
If $E \to F \to G \to E[1]$ is an exact triangle, then
$$
I_B(F)=I_B(E)+I_B(G).
$$
\end{proposition}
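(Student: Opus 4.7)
The plan is to reduce the additivity of $I_B$ along exact triangles to the classical additivity of characteristic cycles, together with the linearity of the integral $I_\Xi$ in the cycle $\Xi$.

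First, I would recall that for constructible sheaves on a real analytic manifold, the characteristic cycle $\CC$ descends to a homomorphism from the Grothendieck group $K_0(\Sh_{cc}(M_\bR;\Lambda_\bSi))$ to the group of Lagrangian cycles $H^0_{\Lambda_\bSi}(M_\bR;\pi^{-1}\omega_{M_\bR})$ (Kashiwara--Schapira). In particular, given an exact triangle $E\to F\to G\to E[1]$, one has the identity
\[
\CC(F) = \CC(E) + \CC(G)
\]
as elements of that cohomology group, which in turn lives in $H_n((\overline{T^*M_\bR})_{\Lambda_\bSi},\Lambda_\bSi^\infty;\bZ)$ by the identification already invoked in the paper (since the singular supports of $E,F,G$ all lie in $\Lambda_\bSi$).

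Next, the assignment $\Xi\mapsto I_\Xi=\int_\Xi e^{-\tW/z}\Omega$ is, by the previous proposition, a well-defined linear functional on the relative homology group $H_n((\overline{T^*M_\bR})_{\Lambda_\bSi},\Lambda_\bSi^\infty;\bZ)$: the integrand is a fixed closed analytic form on $(\overline{T^*M_\bR})_{\Lambda_\bSi}$ which vanishes on $\Lambda_\bSi^\infty$, so the integral only depends on the class of $\Xi$ and is $\bZ$-linear (hence $\bZ$-additive) in that class. Combining linearity of integration with the additivity of $\CC$ yields
\[
I_B(F) = \int_{\CC(F)} e^{-\tW/z}\Omega = \int_{\CC(E)+\CC(G)} e^{-\tW/z}\Omega = I_B(E)+I_B(G).
\]

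There is no real obstacle here beyond invoking the correct additivity statement for $\CC$; the only mild subtlety is confirming that the class $\CC(E)+\CC(G)-\CC(F)$ in the Lagrangian cycle group indeed matches zero in the relative homology group used to define $I_\Xi$. This follows because the map from Lagrangian cycles supported in $\Lambda_\bSi$ to $H_n((\overline{T^*M_\bR})_{\Lambda_\bSi},\Lambda_\bSi^\infty;\bZ)$ is a homomorphism, so equality at the level of characteristic cycles is preserved. No new computation with $\tW$ or $\Omega$ is needed: the proposition is essentially a formal consequence of (i) the functoriality/additivity of $\CC$ under distinguished triangles, and (ii) the linearity of $I_\Xi$ in $\Xi$ established in the preceding propositions.
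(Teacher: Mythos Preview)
Your proof is correct and follows exactly the paper's approach: the paper's entire proof is the single line ``It follows from $\CC(F)=\CC(E)+\CC(G)$ \cite[Proposition 9.4.5]{KaSc94},'' and you have simply unpacked this by making the linearity of $\Xi\mapsto I_\Xi$ explicit.
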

\begin{proof}
It follows from $\CC(F)=\CC(E)+\CC(G)$ \cite[Proposition 9.4.5]{KaSc94}.
\end{proof}

We give a description of the characteristic cycle of the constructible sheaf corresponding to a $\bQ$-ample line bundle. Let $\cL_\vl$ be a $\bQ$-ample line bundle. For each cone $\tau\in \Si$ spanned by $b_i\in \tau$, it determines a face $\Delta_\tau(\vl)\subset \Delta_\vl \subset M_\bR$
\[
\Delta_{\tau}(\vl)=\{u\in M_\bR|\langle b_i, u \rangle \geq -l_i, \forall b_i\notin \tau; \quad\langle b_i,u\rangle =-l_i,\forall b_i\in \tau\}.
\]
In particular, each top dimensional cone $\si$ determines a vertex $\uchi_\si(\vl)=\Delta_\si(\vl)$ of $\Delta_\vl=\Delta_{\{0\}}(\vl)\subset M_\bR$. Then $\Xi(\cL_\vl)$ is the following
\begin{equation}
  \label{eqn:char-ample}
  \Xi(\cL_\vl)=\CC(\kappa(\cL_\vl))=\CC(i_{\Delta_{\vl}!}\omega_{\Delta_{\vl}})=\sum_{\tau \in \Si} (-\tau)\times \Delta_{\tau}(\vl).
\end{equation}

\begin{remark}
  \label{rem:SYZ}
  When $\cL_\vl$ is an ample line bundle on a smooth complete toric manifold $\cX$, one can follow the SYZ T-duality procedure as prescribed in \cite{AP,LYZ}. We recall briefly what this procedure is here, and refer to \cite{FLTZ12} for more details in the toric case.

  Let $\cX_0=\cX\setminus \cD_\infty$, where $\cD_\infty=\sum_{i=1}^r \cD_i$ is an anti-canonical divisor. Let $s_\vl$ be a meromorphic section such that $(s_\vl)=\sum_{i=1}^r l_i \cD_i$. It is holomorphic on $\cX_0$. A $\bT_\bR$-invariant hermitian metric $h$ on the line bundle $\cL_\vl$ defines the norm $\|s_\vl\|_h$ as $\bT_\bR$-invariant function  on $\cX_0$. Let $f_{\vl}=-\log \|s_\vl\|_h$. Identify $\cX_0=N\otimes_\bZ \bC^*$, where $v_1,\dots,v_n\in \bR$ and $\theta_1,\dots,\theta_n\in \bR/(2\pi\bZ)$ are coordinates on $\cX_0$ such that they give complex coordinates $e^{v_i+\sqrt{-1}\theta_i}$. The $\bT_\bR$-invariance of $f_\vl$ implies $f_\vl$ is a function of $v$ only. Define the SYZ dual brane by
  \[
  \SYZ(\cL_\vl)=\{(u,v)\in M_\bR\times N_\bR: u_j=\frac{\partial f_{\vl,h}}{\partial v_j},j=1,\dots,n\}\subset \tcY.
  \]
  The resulting submanifold, as shown in \cite{FLTZ12}, is a costandard Lagrangian on the open set $\Delta_\vl$ when $\cL_\vl$ is ample. The results are isomorphic in the Fukaya category for different choices of $h$. Moreover, for any costandard sheaf $\mathcal F$, the result of \cite{SV} states the characteristic cycle is the conical limit of the costandard Lagrangian $\mu(\cF)$
  \[
  \CC(\cF)=\lim_{\epsilon\to 0}\epsilon \mu(\cF).
  \]
  The integral on $\epsilon \SYZ(\cL_\vl)$
  \[
  \int_{\epsilon \SYZ(\cL_\vl)} e^{-\frac{\tW}{z}}\Omega
  \]
  remains unchanged as $\epsilon\to 0$ since $\epsilon\SYZ(\cL_\vl)=\epsilon \mu\circ\kappa(\cL_\vl)$ corresponds to the same cycle in $H_n((\overline {T^* M_\bR})_{\Lambda_\bSi},\Lambda_\bSi^\infty;\bZ)$. So
  \[
  \int_{\SYZ(\cL_\vl)} e^{-\frac{\tW}{z}}\Omega=I_A(\cL_\vl).
  \]
\end{remark}

\section{Oscillatory integrals and Picard-Fuchs equations}
\label{sec:oscillatory}
\subsection{GKZ system}

Following Iritani \cite{Ir09}, define the $H^*_{\orb,\tbT}(\cX;\bC)$-valued equivariant $I$-function as below.
\begin{definition}
$$
I^\tbT(q_0,q,z)=e^{(t_0+\sum_{a=1}^k t_a p_a^\tbT)/z} \sum_{d\in \bK_{\eff}}q^d \frac{\prod_{\{m\}=\{\langle D_i, d \rangle \}, m\leq 0} (\bar D_i^\tbT+mz)}{\prod_{\{m\}=\{\langle D_i, d \rangle \}, m\leq \langle D_i, d \rangle} (\bar D_i^\tbT+mz)} \one_{v(d)},
$$
where $p_a^\tbT=(c_1)_\tbT(\cL_{e_a^\vee})$, $t_a=\log q_a$ for $a=0,\dots, k$.
\end{definition}
The pullbacks of $I^\tbT$ to torus fixed points in $\IX$ are functions such that the oscillatory integral $I_\Xi$ is a linear combination of them. Iritani has shown this in \cite{Ir09}, with slightly different symbols and set of parameters. We quote his arguments and follow his notions, which lead to Proposition \ref{prop:linear-combination} in our notions. In the following text, $\uw_1,\dots,\uw_r$ form a basis of $H^*_{\tbT}(\mathrm{pt})$ (A-model), and let $w_1,\dots,w_r,\lambda_1,\dots,\lambda_r$ be complex numbers (B-model). Notice that $\uw_i=-(c_1)_\tbT (\cO_{\bP^\infty}(-1))$. Here we fix $H^*_{\tbT}(\IX)$-valued function ($\rho_a=\sum_{i=1}^r l_i^{(a)}$)
\[
H^\tbT(q,z)=(-1)^n z^{-\frac{\uw_1+\dots+\uw_r}{2\pi\sqrt{-1}}} \sum_{d\in \bK_\eff}  \frac{\prod_{a=1}^k(\frac{q_a}{z^{\rho_a}})^{\frac{p^\tbT_a}{2\pi\sqrt{-1}}} q^d \one_{\inv(v(d))}}{z^{\langle D_1+\dots+D_r, d\rangle}\prod_{i=1}^r\Gamma(1+\langle D_i, d\rangle +\frac{\bar D^\tbT_i}{2\pi\sqrt{-1}})}.
\]
Let $\iota_{\si,v}:\fp_{\si,v}\hookrightarrow \cI\cX$ be a $\bT$-fixed point. Then define
$$
H_{\si,v}(q,z)=\iota_{\si,v}^* H^\tbT(q,z),\quad I_{\si,v}(q,z)=\iota_{\si,v}^* I^\tbT(q_0,q,z)\in \tM.
$$
Comparing with $H_{\si,\inv(v)}|_{\uw_i=2\pi\sqrt{-1}w_i/z}$ with $I_{\si,v}|_{\uw_i=w_i}$ we find that $$H_{\si,\inv(v)}|_{\uw_i=2\pi\sqrt{-1}w_i/z}=c_{\si,v}(z,t_0,w_i)I_{\si,v}\vert_{\uw_i=w_i}$$ for $z>0,$ where  $t_0\in \bC$ and generic $w_i\in \bC$.

In \cite[Lemma 4.19]{Ir09}, because $H_{\si,v}$ are fundamental solutions to a GKZ system, the following integral can be expressed as a linear combination where $z>0$, $t_0\in \bC$, generic $w_i\in \bC$ and $0<q_a<\epsilon$ for some $\epsilon>0$
\[
\int_\Xi e^{-\frac{W}{z}-\frac{\lambda_1+\dots+\lambda_r}{2\pi\sqrt{-1}}}\Omega=e^{\frac{-t_0}{z}}\sum_{(\si,v)} b_{\si,v}(\lambda_i) H_{\si,v}(q,-z)\vert_{\uw_i=\lambda_i},
\]
Setting $\lambda_i=\frac{2\pi\sqrt{-1}w_i}{z}$, we have the following proposition, where $h_{\si,v}(z,t_0,w_i)=c_{\si,v}(-z,t_0,w_i) b_{\si,\inv(v)}$.
\begin{proposition}
  \label{prop:linear-combination}
  For $z>0, t_0\in \bC$, generic $w_i\in\bC$ and $0<q_a<\epsilon$ with some $\epsilon>0$,
$$I_\Xi=\sum_{(\si,v)}h_{\si,v}(z,t_0,w_i)I_{\si,v}(q,-z)|_{\uw_i=w_i}.$$
\end{proposition}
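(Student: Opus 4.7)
This proposition is a direct reformulation of Iritani's Lemma 4.19, which is quoted in the paragraph preceding the statement. The plan is to specialize the auxiliary parameters $\lambda_i$ in that formula to the values $\lambda_i = 2\pi\sqrt{-1}\, w_i / z$ and then replace each fundamental solution $H_{\si,v}$ on the right by its cohomology-valued counterpart $I_{\si,v}$ using the functional identity
\[
H_{\si,\inv(v)}\vert_{\uw_i=2\pi\sqrt{-1}w_i/z}=c_{\si,v}(z,t_0,w_i)\,I_{\si,v}\vert_{\uw_i=w_i}
\]
already recorded in the text. The substantive input is Iritani's lemma itself; what remains is substitution, reindexing, and tracking prefactors.

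Explicitly, I would first verify that under $\lambda_i = 2\pi\sqrt{-1}w_i/z$ the left-hand side of Iritani's formula, together with the prefactor $e^{-t_0/z}$, coincides with $I_\Xi = \int_\Xi e^{-\tW/z}\Omega$, using the identification $\tW = W + \sum_i w_i x_i$. Next, I would reindex the right-hand sum by the involution $v\mapsto \inv(v)$ so that $H_{\si,v}$ is replaced by $H_{\si,\inv(v)}$, and then apply the identity above at $-z$ in place of $z$ to rewrite each summand in terms of $I_{\si,v}(q,-z)\vert_{\uw_i=w_i}$. Collecting the resulting scalar prefactors produces the coefficients
\[
h_{\si,v}(z,t_0,w_i) = c_{\si,v}(-z,t_0,w_i)\, b_{\si,\inv(v)}(\lambda_i)\bigl|_{\lambda_i = 2\pi\sqrt{-1}w_i/z},
\]
matching the form stated at the end of the paragraph preceding the proposition.

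The main obstacle is not conceptual but careful bookkeeping. One must track the involution $v\leftrightarrow \inv(v)$ arising from the asymmetry between $H^\tbT$ (indexed by $\inv(v(d))$) and $I^\tbT$ (indexed by $v(d)$); the flip $z\leftrightarrow -z$ between the oscillatory integral and the fundamental solution $H^\tbT(q,-z)$; the two distinct specializations of the equivariant parameter, namely $\uw_i\mapsto w_i$ on the $I$-side versus $\uw_i\mapsto 2\pi\sqrt{-1}w_i/z$ on the $H$-side; the convergence range $0 < q_a < \epsilon$ required for the hypergeometric series defining $I^\tbT$ and $H^\tbT$; and genericity of $w_i\in\bC$ to keep the Gamma-factors in the denominator of $H^\tbT$ away from their poles. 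None of these steps is difficult once Iritani's Lemma 4.19 is in hand, and the real analytic/geometric content of the proposition is entirely absorbed into the cited lemma.
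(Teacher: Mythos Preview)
Your proposal is correct and matches the paper's own argument exactly: the proposition is not given a separate proof in the paper, but is derived in the paragraph immediately preceding it by setting $\lambda_i = 2\pi\sqrt{-1}w_i/z$ in Iritani's Lemma 4.19, reindexing via the involution $v\mapsto\inv(v)$, and applying the relation $H_{\si,\inv(v)}|_{\uw_i=2\pi\sqrt{-1}w_i/z}=c_{\si,v}(z,t_0,w_i)I_{\si,v}|_{\uw_i=w_i}$ at $-z$, which yields precisely $h_{\si,v}(z,t_0,w_i)=c_{\si,v}(-z,t_0,w_i)\,b_{\si,\inv(v)}$. Your bookkeeping checklist (the $z\leftrightarrow -z$ flip, the two parameter specializations, genericity of $w_i$, and the convergence range for $q_a$) accurately reflects the only points requiring care.
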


\subsection{Fixing the coefficients}
For any $\bT$-equivariant line bundle $\cL$ on $\cX$, define $\cL_v=(\pr^*\cL)\vert_{\cX_v}$.  The stabilizer of $\cX_v$ acts on the orbifold line bundle $\cL_v$ on $\cX_v$ by $\exp(2\pi \sqrt{-1}f)$, where the rational number $0\leq f <1$, called the \emph{age of $\cL_v$ along $\cX_v$}. We denote the age of $\cL_v$ along $\cX_v$ by $\age_v(\cL)$.  We define the following characteristic classes in $H^*_{\tbT}(\cIX)$ ($\cL$ is also  $\tbT$-equivariant since the action of $\tbT$ factors through $\bT$)

\begin{align}
  \label{eqn:ch}
\widetilde{\mathrm{ch}}_z(\cL)&=\bigoplus_{v\in \mathrm{Box}} e^{2\pi\sqrt{-1}(-\frac{(c_1)_{\tbT} (\cL_v)}{z}+\age_{v}(\cL))},\\
  \label{eqn:gamma}
\tGamma_z(\cL)&=\bigoplus_{v\in \mathrm{Box}} z^{\frac{(c_1)_{\tbT}(\cL_v)}{z}+1-\age_v(\cL)}\Gamma(\frac{(c_1)_{\tbT}(\cL_v)}{z}+1-\age_v(\cL)).
\end{align}We understand these classes as series of cohomology classes. In particular we expand the Gamma function at $1-\age_v(\cL)> 0$. We extend these classes to any $\cE\in D^b\Coh_\bT(\cX)$ via
\begin{align}
  \label{eqn:class-additivity}
  \tch_z(\cE)&=\tch_z(\cE_1)+\tch_z(\cE_2),\\
  \tGamma_z(\cE)&=\tGamma_z(\cE_1)\tGamma_z(\cE_2), \nonumber
\end{align}
for any exact triangle $\cE_1\to \cE\to \cE_2\to\cE_1[1]$.

Let $\sigma$ be a top-dimensional cone, and write $b_j=\sum_{b_i\in \sigma} s_{ij} b_i$. Then we have
$
D_i=-\sum_{b_j\notin \sigma}{s_{ij}}D_j.
$ The $\tbT$-equivariant Chern roots of $\iota_{\sigma,v}^* T\cX$ are $-\tuw_i$ for $b_i\in \sigma$ where
$$
\tuw_i=\uw_i+\sum_{b_j\notin \sigma} s_{ij}\uw_j.
$$
We also define complex numbers
\[
\tw_i=w_i+\sum_{b_j\notin \si}s_{ij}w_j.
\]

Let $\cL_\vl$ be an ample line bundle, $\si$ be a top dimensional cone, and $v_0\in \si$. Define the piecewise Lagrangian for $t\geq 0$
\begin{equation}
  \label{eqn:ample-char-shifted}
\Xi_{\si,t}(\cL_\vl)=\Xi(\cL_\vl)+ t v_0=\sum_{\tau \in \Si} (-\tau+tv_0)\times \Delta_\tau(\vl).
\end{equation}
This Lagrangian is not conical for $t\neq 0$. They are all homotopic to each other, and corresponds to the same element in $H_n((\overline{ T^*M_\bR})_{\Lambda_\bSi},\Lambda_\bSi^\infty;\bZ)$.

For a given top dimensional cone $\si$, there is a uniquely determined $\eta_\si$ such that
$\eta_{\si}(\tb_i)=0$, i.e. $\eta_{\si,ia}=0$ for $b_i\in \si$. By the fact that each $e_a^\vee \in \tNef_\si$ and $e_a^\vee=\sum_{i=1}^r \eta_{\si,ia} D_i$, we know each $\eta_{\si,ia}>0$ for $b_i\notin \si$. So each $q'_{\eta_\si,i}=\prod_{a=1}^k q_a^{\eta_{\si,ia}}$ for $b_i\notin \si$, and $\lim_{q\to 0} q'_{\eta_\si}=0$. This limit is called the large radius limit when $\cX$ is a smooth manifold.

For this $\si$ and $\eta_\si$, we define
\[
X^\si_i=X_i=Y^{b_i},\ b_i\in \si;\quad X^\si_i=Y^{b_i},\ b_i\notin \si.
\]
The superpotential and the equivariantly perturbed superpotential are
\begin{gather*}
  W_\eta=\sum_{b_i\in \si}X_i+\sum_{b_i\notin \si} X_i=\sum_{b_i\in \si} X_i^\si+ \sum_{b_i\notin \si} q'_{\eta_\si,i} X_i^\si,\\
  \tW_\eta=W_\eta+\sum_{i=1}^r w_i x_i.
\end{gather*}

We list a simple fact as a lemma, which we will use several times.
\begin{lemma}
\label{lemm:twi}
For a top dimensional cone $\si$ and any given $A\gg 0$, we are able to choose $w_1,\dots, w_r$ in an open region in $\bC^r$ such that $\mathrm{Re} \tw_i<-A,\ b_i\in \si$.
\end{lemma}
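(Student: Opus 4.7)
The plan is to exploit the fact that, for a top-dimensional simplicial cone $\si$, the set $\{b_i : b_i\in\si\}$ is a basis of $N_\bR$, so the linear map $(w_1,\dots,w_r)\mapsto (\tw_i)_{b_i\in\si}$ defined by $\tw_i = w_i + \sum_{b_j\notin\si} s_{ij} w_j$ is a surjection $\bC^r \to \bC^n$. In particular, taking real parts gives an $\bR$-linear surjection $\bR^{2r}\to \bR^n$, and the preimage of any open subset of $\bR^n$ is open in $\bC^r$.

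Concretely, I would exhibit one point in the desired region and invoke openness. The simplest choice is to set $w_j = 0$ for all $b_j\notin\si$; then for each $b_i\in\si$ we have $\tw_i = w_i$, so setting $w_i = -(A+1)$ (or any complex number with real part less than $-A$) produces $\mathrm{Re}\,\tw_i < -A$ for every $b_i\in\si$. Each condition $\mathrm{Re}\,\tw_i < -A$ is an open half-space in $\bC^r$, and there are only $n$ such conditions (one per ray of $\si$), so their intersection is an open region containing the chosen point, which proves the lemma. There is no genuine obstacle here; the only content is the linear-algebra observation that the rays of a top-dimensional simplicial cone form a basis, ensuring the coordinates $\{\tw_i\}_{b_i\in\si}$ can be prescribed freely by tuning the corresponding $w_i$'s.
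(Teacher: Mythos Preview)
Your proposal is correct and takes essentially the same approach as the paper: make $\mathrm{Re}\,w_i$ very negative for $b_i\in\si$ while controlling $w_j$ for $b_j\notin\si$ (the paper says ``bounded'', you specialize to $w_j=0$), so that $\tw_i = w_i + \sum_{b_j\notin\si} s_{ij} w_j$ has real part below $-A$. Your explicit verification of openness is a slight elaboration, but the argument is the same one-line observation.
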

\begin{proof}
Let $\mathrm{Re}w_i\ll 0$ for $b_i\in \si$ while keeping $w_i$ bounded for $b_i\notin \si$.
\end{proof}

The map $\nu$ defined in Equation \eqref{eqn:v} identifies $\bK_{\si}/\bL$ with $\Boxs$. Since we have fixed an integral basis $e_1,\dots, e_k$ of $\bL$, this $\nu$ identifies $\{\sum_{i=1}^k t_i e_i|0\leq t_i <1\}\cap \bK_\si$ with $\Boxs$. In the rest of this section we regard any $v\in \Boxs$ as an element in $\bK_{\eff,\si}$.

\begin{proposition}
For any line bundle $\mathcal L_{\vl}$, the coefficients in the decomposition of $I_A(\cL_\vl)$ in Proposition \ref{prop:linear-combination} are (notice that $h_{\si,v}$ do no depend on $t_0$)
$$
h_{\si,v}(z,w_i)=\frac{\iota_{\sigma,v}^*\left( \inv^*(\widetilde\Gamma_{z}(T\cX)\widetilde{\mathrm{ch}}_{z}(\cL_\vl))\right)\vert_{\uw_i=w_i}}{|G_\sigma|(\iota^*_{\si,v}e_{\tbT}(T\cIX))|_{\uw_i=w_i}}.
$$
\end{proposition}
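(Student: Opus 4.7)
The plan is to determine $h_{\si,v}(z,w_i)$ by matching leading-order asymptotics of both sides of Proposition~\ref{prop:linear-combination} in the large-radius limit $q\to 0$ adapted to a fixed top-dimensional cone $\si$. Since $h_{\si,v}$ depends on neither $q$ nor $t_0$, it suffices to identify a single order of the asymptotic expansion. On the I-function side, $\iota^*_{\si',v'}I^\tbT = e^{(t_0+\sum_a t_a p_a^\tbT)/z}|_{\fp_{\si',v'}}\bigl(\one_{v'}+O(q)\bigr)$, so the contributions of different $(\si',v')$ can be separated by matching their $q$- and $t_0$-dependence.

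To analyze the integral side, I would work in the $\eta_\si$-splitting, where $\tW\to \tW^\si := \sum_{b_i\in\si}X_i^\si + \sum_{i=1}^r w_i x_i$ as $q\to 0$, and replace $\Xi(\cL_\vl)$ by the homotopic chain $\Xi_{\si,t}(\cL_\vl)$ from~\eqref{eqn:ample-char-shifted}. By Lemma~\ref{lemm:twi}, I may choose $w_i$ with $\mathrm{Re}\,\tw_i\ll 0$ for $b_i\in\si$, guaranteeing exponential decay along the noncompact $-\si$ direction. Among the cone pieces $(-\tau)\times\Delta_\tau(\vl)$ of the chain, only $(-\si)\times\{\uchi_\si(\vl)\}$ survives at leading order in $q$ in this splitting; the others contribute subleading exponentials in $\log q_a$ and are discarded.

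The dominant contribution, after substituting $X_i^\si = zZ_i$ for $b_i\in\si$ and integrating out the periodic $u$-coordinate (which produces a Fourier series over characters of the stabilizer of $\fp_\si$, equivalently a sum over $v\in\Boxs$), factorizes into a product of Gamma integrals of the form
$$\prod_{b_i\in\si}\int_0^\infty Z_i^{\tw_i/z - l_i + c_i(v) - 1}e^{-Z_i}\,dZ_i,$$
times an exponential prefactor in $(t_0,\log q_a,w_i)$ and a phase $e^{2\pi\sqrt{-1}\age_v(\cL_\vl)}$ arising from the fractional Fourier mode indexed by $v$. Each Gamma integral reassembles, via~\eqref{eqn:gamma} applied to the equivariant Chern roots of $T\cX_v$ at $\fp_{\si,v}$, into a factor of $\iota_{\si,v}^*\inv^*\tGamma_z(T\cX)$; the combination of phase and $(c_1)_\tbT(\cL_{\vl,v})/z$-shift reconstructs $\iota_{\si,v}^*\inv^*\tch_z(\cL_\vl)$ via~\eqref{eqn:ch}; and the remaining normalization produces $|G_\si|^{-1}\iota_{\si,v}^*e_\tbT(T\cIX)^{-1}$, the first factor coming from the $|\Boxs|=|G_\si|$ Fourier modes and the second from the product $\prod_{b_i\in\si}(-\tuw_i)$ that appears both in the Gamma arguments and in the I-function's leading term.

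The main obstacle will be the orbifold bookkeeping: correctly identifying the fractional shifts $c_i(v)$, the ages $\age_v(\cL_\vl)$, and the effect of the involution $\inv$ (which exchanges $v$ with $\inv(v)$ and accounts for the $-z$ appearing in the I-function) with the corresponding ingredients in~\eqref{eqn:ch} and~\eqref{eqn:gamma}, including branch choices for the $z^\bullet$-factors. A secondary technical point is justifying the interchange of the $q\to 0$ limit with integration along the noncompact chain $-\si$, which rests on Lemma~\ref{lemm:twi} together with the absolute convergence underlying Proposition~\ref{prop:linear-combination}. Granting these, the stated formula for $h_{\si,v}$ follows by matching coefficients.
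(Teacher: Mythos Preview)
Your overall strategy matches the paper's: reduce to ample $\cL_\vl$, pass to the $\eta_\si$-splitting, use the shifted cycle $\Xi_{\si,t}(\cL_\vl)$ with $t\to\infty$, and invoke Lemma~\ref{lemm:twi} to make the Gamma integrals converge. But the mechanism you propose for the sum over twisted sectors $v\in\Boxs$ is wrong, and this is the heart of the orbifold content. There is no ``periodic $u$-coordinate'' to integrate out: on the piece $(-\si)\times\{\uchi_\si(\vl)\}\subset\tcY=N_\bR\times M_\bR$ the base point $u=\uchi_\si(\vl)$ is fixed, and the integration is purely over $v\in N_\bR$. In the paper the sum over $v$ comes instead from expanding $e^{-\sum_{b_i\notin\si}X_i/z}$ with $X_i=q'_{\eta_\si,i}X_i^\si$: this produces a sum over $\beta\in\bK_{\eff,\si}$, and the leading term for each $v\in\Boxs$ is exactly the monomial $q^v$ (viewing $v$ as the small representative in $\bK_{\eff,\si}$). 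The different $h_{\si,v}$ are then extracted by matching these distinct orders $q^v$ against the leading terms of $I_{\si,v}(q,-z)$; a single leading order does not suffice. Relatedly, the factor $|G_\si|^{-1}$ is the Jacobian of the linear change $y_j\mapsto x_i^\si$ (the index $[N:N_\si]$), not a count of Fourier modes.

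Two smaller points. First, in your Gamma integral the exponent is off: the $-l_i$ does not shift the Gamma argument but enters as the phase $e^{-2\pi\sqrt{-1}(-\langle v,D_i\rangle-\tw_i/z)l_i}$ coming from the imaginary part of $x_i^\si$ at the vertex $\uchi_\si(\vl)$, and the sign should be $-\tw_i/z$ (not $+\tw_i/z$) for convergence under the choice of Lemma~\ref{lemm:twi}. Second, the claim that pieces over other vertices ``contribute subleading exponentials in $\log q_a$'' is not what makes the $\si'\neq\si$ terms disappear: the cycle is a single homology class and all pieces contribute together. What actually suppresses $I_{\si',v}$ for $\si'\neq\si$ on the right-hand side is a second, $N$-dependent application of Lemma~\ref{lemm:twi}, forcing $q_{\eta_{\si'},j}'^{-\tw_j/z}=q^\beta$ with $|\beta|>N$; without this the comparison does not close.
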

\begin{proof}
We just need to prove this statement for ample line bundle $\mathcal L_\vl$. The proposition then follows from the additivity of $\tch$ (Equation  \ref{eqn:class-additivity}) and characteristic cycles (Proposition \ref{prop:integral-additivity}), together with the fact that $\bT$-equivariant $\bQ$-ample line bundles generate the equivariant $K$-group and the derived category $D^b\Coh_\bT(\cX)$. Since the choice of $\eta$ does not affect the integral $I_{\Xi(\cL_\vl)}$, we simply need to consider \[\int_{\Xi(\cL_\vl)}e^{-\frac{\tW_{\eta_\si}}{z}}\Omega.\]

In this proof, the symbols $O()$ and $o()$ concern the limit $q\to 0$.
We have
\begin{align*}
 &|G_\sigma| e^{\frac{t_0}{z}}\prod_{j\notin \sigma}(q'_{\eta_\si,j})^{\frac{w_j}{z}}\int_{\Xi(\cL_\vl)} e^{-\frac{\tW_\eta}{z}} \Omega=\int_{\Xi(\cL_\vl)} e^{-\frac{\sum_{b_i\in\sigma} X_i}{z}-\frac{\sum_{b_i\notin \sigma} X_i}{z}}\prod_{i=1}^r {(X^\si_i)^{-\frac{w_i}{z}}} \prod_{b_i\in \sigma} \frac{dX^\si_i}{X^\si_i}\\
 =&\int_{\Xi_{\si,t}(\cL_\vl)} e^{-\frac{\sum_{b_i\in\sigma} X_i}{z}-\frac{\sum_{b_i\notin \sigma} X_i}{z}}\prod_{i=1}^r {(X^\si_i)^{-\frac{w_i}{z}}} \prod_{b_i\in \sigma} \frac{dX^\si_i}{X^\si_i}.
 \end{align*}
Suppose $N>0$, and let $f(q_1,\dots,q_a)$ be a function of $q\in (\bR_{> 0})^r$. We say $f(q)=O_N$ if $f(tq_1,\dots,tq_a)=O(t^N)$ as $t\to 0$. Consider the expansion
\begin{align*}
&e^{-\frac{\sum_{b_i\notin \sigma} X^\si_i}{z}}\prod_{i=1}^r (X^\si_i)^{-\frac{w_i}{z}-1}=\sum_{\beta \in \bK_{\eff,\si}} \prod_{b_i\not\in \si}\frac{q^\beta}{\langle \beta, D_i \rangle !(-z)^{\langle \beta,D_i\rangle}} \prod_{b_i\in \si}(X^\si_i)^{-\langle \beta, D_i \rangle -\frac{\tw_i}{z}-1}\\
=&\sum_{\beta \in \bK_{\eff,\si},|\beta|\leq N} \prod_{b_i\not\in \si}\frac{q^\beta}{\langle \beta, D_i \rangle !(-z)^{\langle \beta,D_i\rangle}} \prod_{b_i\in \si}(X^\si_i)^{-\langle \beta, D_i \rangle -\frac{\tw_i}{z}-1}+f(q,X^\si,w_i).
\end{align*}
Here the remaining terms $f(q,X^\si,w_i)$, and $|\beta|\leq N$ denotes the condition $\sum_{i=1}^a \langle \beta, e^\vee _a\rangle\leq N$.

We first choose an $N\gg 0$, and by Lemma \ref{lemm:twi}, we choose $w_i$ such that $\mathrm{Re}(-\frac{\tw_i}{z})>N+1$. We have
\begin{align*}
&\int_{\Xi_{\si,t}(\cL_\vl)} e^{-\frac{\sum_{b_i\in\sigma} X_i}{z}-\frac{\sum_{b_i\notin \sigma} X_i}{z}}\prod_{i=1}^r {(X^\si_i)^{-\frac{w_i}{z}}} \prod_{b_i\in \sigma} \frac{dX^\si_i}{X^\si_i}\\
=&\int_{\Xi_{\si,t}(\cL_\vl)} e^{-\frac{\sum_{b_i\in\sigma} X^\si_i}{z}}(\sum_{\beta \in \Boxs} \frac{q^\beta\prod_{b_i\in \si}(X^\si_i)^{-\langle \beta, D_i \rangle -\frac{\tw_i}{z}-1}}{\prod_{b_i\not\in \si}\langle \beta, D_i \rangle !(-z)^{\langle \beta,D_i\rangle}})\prod_{b_i\in \si}dX^\si_i \\ &\qquad \qquad   +\int_{\Xi_{\si,t}(\cL_\vl)}e^{-\frac{\sum_{b_i\in\si}X^\si_i}{z}} f(q,X^\si,w_i)\prod_{b_i\in \si}dX^\si_i\\
=&\sum_{\beta\in \bK_{\eff,\si}, |\beta| \leq N} \prod_{b_i\notin \sigma}\frac{q^\beta}{\langle \beta, D_i\rangle!(-z)^{\langle \beta, D_i\rangle}} \prod_{b_i\in \sigma} \frac{\Gamma(-\langle \beta, D_i \rangle - \frac{\tw_i}{z})}{z^{\langle \beta, D_i \rangle +\frac{\tw_i}{z}}}e^{-2\pi\sqrt{-1}(-\langle \beta,D_i \rangle-\frac{\tw_i}{z})l_i}+O_N.
\end{align*}
In the computation above, the first two expressions do not depend on the value of $t$. So taking $t\to \infty$, the first integral on the second expression becomes an integral on $N_\bR\times \uchi_\si(\vl)$ -- the Gamma function evaluates this integral. While the second integral on the second expression produces the $O_N$ term since $f(q,X^\si,w_i)=O_N$.

We compute the pull-back of the $I$-function to $\fp_{\si',v}$ where $v\in \mathrm{Box}(\si')$ for any top dimensional cone $\si'$. Note that
\begin{gather*}
\iota_{\si,v}^* \bar D_i^\tbT=-\tuw_i,\quad b_i\in \si,
\end{gather*}
while
\[
\iota_{\si',v}^* \left(\prod_{a=1}^k q_a^{\frac{p_a^\tbT}{z}}\right)=\prod_{b_j\notin \si'} q_{\eta_{\si'},j}'^{\frac{w_j}{z}},\qquad
\prod_{b_i\notin \si}q_{\eta_\si,i}'^{\frac{w_i}{z}}=\prod_{b_j\notin \si\cup \si'}q_{\eta_{\si'},j}'^{\frac{w_j}{z}}\prod_{b_j\in (\si\setminus \si')}q_{\eta_{\si'},j}'^{-\frac{\sum_{b_i\notin \si} s_{ij}w_i}{z}}.
\]

We compute:
\begin{align*}
&I_{\sigma,v}(q,-z)|_{\uw_i=w_i}\\
=&e^{-\frac{t_0}{z}}\prod_{b_j\notin\sigma}(q'_{\eta_\sigma,j})^{-\frac{w_j}{z}}\left(\frac{q^v}{\prod_{b_i\notin \sigma}\langle v,D_i \rangle!(-z)^{\langle v,D_i \rangle}}\prod_{b_i\in\sigma}\frac{1}{(-z)^{\lceil \langle v,D_i \rangle \rceil }(\langle v,D_i\rangle +\frac{\tw_i}{z})_{\lceil \langle v,D_i \rangle \rceil}}+q^v O_1\right),\\
&I_{\sigma',v}(q,-z)|_{\uw_i=w_i}=e^{-\frac{t_0}{z}}\prod_{b_j\notin\sigma'}(q'_{\eta_{\sigma'},j})^{-\frac{w_j}{z}}\left(O(q^v)\right),\quad \si'\neq \si.
\end{align*}
Here we adopt the Pochhammer symbol $(a)_b=\Gamma(a)/\Gamma(a-b+1)$. Then
\begin{align*}
&|G_{\si}|e^{\frac{t_0}{z}}\prod_{b_i\notin \si} (q'_{\eta_\si,i})^{\frac{w_i}{z}}I_{\si,v}(q,-z)|_{\uw_i=w_i}\\
=&\frac{q^v}{\prod_{b_i\notin \sigma}\langle v,D_i \rangle!(-z)^{\langle v,D_i \rangle}}\prod_{b_i\in\sigma}\frac{1}{(-z)^{\lceil \langle v,D_i\rangle\rceil}(\langle v,D_i\rangle +\frac{-\tw_i}{-z})_{\lceil v,D_i \rceil}}+q^vO_1,\\
&|G_{\si}|e^{\frac{t_0}{z}}\prod_{b_i\notin \si} (q'_{\eta_\si,i})^{\frac{w_i}{z}}I_{\si',v}(q,-z)|_{\uw_i=w_i}=\prod_{b_j\in (\si\setminus \si')}q_{\eta_{\si'},j}'^{-\frac{w_j+\sum_{b_i\notin\si}{s_{ij}w_i}}{z}}O(1).
\end{align*}
We further choose $w_1,\dots, w_r$ such that $\mathrm{Re}\tw_i<-\max\{A,N\}$ for each $b_i \in \si$ by Lemma \ref{lemm:twi}. Here $A=A(N)$ is sufficiently large such that for any $\si'\neq \si$ and $b_i\in (\si\setminus \si')$, $q_{\eta_{\si'},i}'^{-\frac{\tw_i}{z}}=q^\beta$, and $|\beta|>N.$ Comparing with the coefficients of $I_{\sigma,v}(q,-z)$, we find that
\begin{align*}
h_{\sigma,v}(z,w_i)
&=\frac{1}{|G_\si|}\prod_{b_i\in \sigma} z^{-\frac{\tw_i}{z}+\{-\langle v,D_i\rangle\}}\Gamma(\frac{-\tw_i}{z}+\{-\langle v,D_i\rangle\})e^{-2\pi\sqrt{-1}(-\langle v,D_i \rangle -\frac{\tw_i}{z})l_i} \\
&=\frac{\iota_{\sigma,v}^*\left( \inv^*(\widetilde\Gamma_{z}(T\cX)\widetilde{\mathrm{ch}}_{z}(\cL_\vl))\right)\vert_{\uw_i=w_i}}{|G_\sigma|(\iota^*_{\si,v}e_{\tbT}(T\cIX))|_{\uw_i=w_i}}.
\end{align*}
Here $(c_1)_{\tbT}(\cL_\vl)=-\sum_{i\in\sigma} l_i \tuw_i$, and $\age_v(\cL_\vl)=\sum_{b_i\in \sigma} \{-\langle v,D_i\rangle \}l_i$.
\end{proof}

\begin{figure}[h]
\psfrag{M}{\small $M_\bR\cong \bR$}
\psfrag{N}{\small $N_\bR\cong \bR$}
\psfrag{a}{\small $-\infty -2l_1\pi\sqrt{-1}$}
\psfrag{b}{\small $\infty+2l_2\pi\sqrt{-1}$}
\includegraphics[scale=0.37]{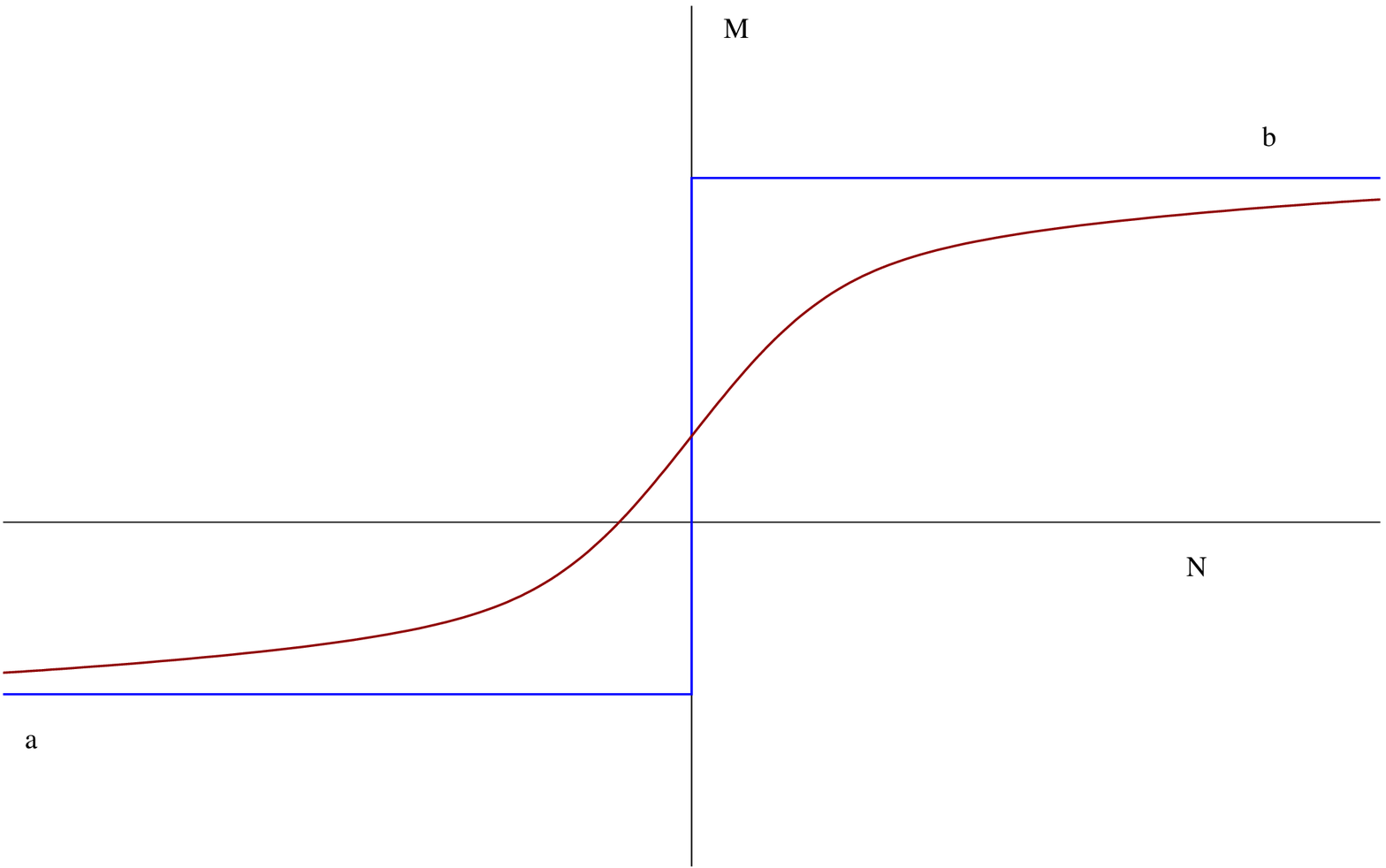}
\caption{The characteristic cycle (blue) $\CC(\cO_{\bP^1}(l_1 \cD_1 + l_2 \cD_2))$ and the SYZ dual of the same line bundle (red). They live in $T^*{M_\bR}\cong N_\bR \times M_\bR$. }
\label{fig:p1-cycle}
\end{figure}

\begin{example}[\cite{FLZ15}]
$\cX=\bP^1$, $\si_1=\si=\bR_{\geq 0},\si_2=\bR_{\leq 0}$, $\tW_{\eta_\si}=X_1+\frac{q}{X_1}+w_1 \log X_1 + w_2 \log \frac{q}{X_1}.$ Let $\cL_{\vl}=\cO(l_1 \cD_1+ l_2 \cD_2)$. When $q\to 0$,
\begin{align*}
e^{t_0/z}q^{w_2/z} \int_{\Xi(\cL_\vl)} e^{-\tW_{\eta_\si}/z}\Omega&= \int_{\Xi(\cL_\vl)} e^{-\frac{X_1}{z}} e^{-\frac{q}{z X_1}} X_1^{(w_1-w_2)/z} d X_1/X_1\\
&\to z^{\frac{w_2-w_1}{z}}\Gamma(\frac{w_2-w_1}{z}) e^{\frac{2\pi\sqrt{-1}l_1(w_2-w_1)}{z}}=\iota_\sigma^* (\tGamma_z(T\cX) \tch(\cL_\vl))_{\uw_i=w_i}.
\end{align*}
Thus \[h_{\si_1}=z^{\frac{w_2-w_1}{z}}\Gamma(\frac{w_2-w_1}{z}) e^{\frac{2\pi\sqrt{-1}l_1(w_2-w_1)}{z}}.\]
Similarly
\[
h_{\si_2}=z^{\frac{w_1-w_2}{z}}\Gamma(\frac{w_1-w_2}{z}) e^{\frac{2\pi\sqrt{-1}l_2(w_1-w_2)}{z}}.
\]
Comparing to the notion of \cite{FLZ15}, $\cD_1=p_2$ and $\cD_2=p_1$ for $p_1,p_2$ defined in \cite{FLZ15}.
\end{example}

\begin{example}
$\cX$ is a semi-positive smooth projective toric variety.
\begin{gather*}
\lim_{q\to 0} e^{t_0/z}\prod_{j\notin \sigma}(q_{\eta_\sigma,j}')^{\frac{w_j}{z}}\int_{\Xi(\cL_\vl)} e^{-\frac{\tW_{\eta_\si}}{z}}\Omega= \prod_{i\in \sigma}  z^{- \frac{\tw_i}{z}}\Gamma(- \frac{\tw_i}{z})e^{2\pi\sqrt{-1}\frac{\tw_i}{z}l_i}= \tGamma_z(T\cX)\tch_z(\cL),\\
\lim_{q\to 0} \iota_\sigma^* \left (e^{t_0/z}\prod_{j\notin \sigma} q_{\eta_\sigma,j}'^{\frac{p_j^\tbT}{z}} I^\tbT(q_0,q,-z)\right)=1,\\
\int_{\Xi(\cL_\vl)} e^{-\frac{\tW}{z}}\Omega= \sum_\sigma \iota_{\sigma}^* (\tGamma_z(TX) \tch_z(\cL) I^\tbT(q_0,q,-z))|_{\uw_i=w_i}.
\end{gather*}
\end{example}

\section{Mirror theorem and Gromov-Witten potentials}
\label{sec:mirror}

\begin{definition}
  \label{def:gw-potential}
  Let $\cX$ be a complete toric orbifold. We define genus $g$, degree $d\in H_2(\cX;\bZ)$, $\tbT$-equivariant descendant Gromov-Witten invariants of $\cX$ as
  \[
    \langle \tau_{a_1}(\gamma_1)\dots \tau_{a_n}(\gamma_n)\rangle_{g,n,d}^{\cX}=\langle \gamma_1 \psi_1^{a_1}\dots \gamma_n\psi_n^{a_n}\rangle_{g,n,d}^{\cX}=\int_{[\overline{\cM}_{g,n}(\cX;d)]^\vir}\prod_{j=1}^n \psi_j^{a_j}\ev^*_j(\gamma_j)\in \bC[\uw],
  \]
  where $\gamma_i\in H^*_{\CR,\tbT}(\cX)$ and $\ev_j:\overline{\cM}(\cX;d)\to \cIX$ is the $j$-th evaluation map. Let $\btau\in H^{\leq 2}_{\CR,\tbT}(\cX;\bC)$. We also define
  \begin{align*}
   \llangle \tau_{a_1}(\gamma_1),\dots,\tau_{a_n}(\gamma_n)\rrangle_{g,n}^{\cX}=\llangle \gamma_1 \psi_1^{a_1},\dots,\gamma_n\psi_n^{a_n}\rrangle_{g,n}^{\cX}\\
   =\sum_{d\geq 0} \sum_{\ell=0}^\infty \frac{1}{\ell!}\langle \tau_{a_1}(\gamma_1),\dots, \tau_{a_n}(\gamma_n),\underbrace{\tau_0(\btau),\dots,\tau_0(\btau)}_{\text{$\ell$ times}}\rangle_{g,n+\ell,d}^{\cX}.
 \end{align*}
\end{definition}
We do not use Novikov variables since the convergence issue regarding $e^{\btau}$ is resolved in \cites{Ir07,CoIr15}, or after invoking the mirror theorem and the oscillatory integral expression of the $I$-function. For the semi-positive $\cX$, we quote the equivariant mirror theorem of toric stacks as below.
\begin{theorem}[Coates--Corti--Iritani--Tseng \cite{CCIT15}, Cheong--Ciocan-Fontanine--Kim \cite{CCK15}]
Let $\phi_\alpha$ be a basis of $H^*_{\orb,\tbT}(\cX;\bC)$ and $\phi^\alpha$ be its dual.
$$\llangle\frac{\phi_\alpha}{z(z-\psi)}\rrangle_{0,1}^\cX\phi^\alpha=I^\tbT(q_0,q,z),$$
under the mirror map
$$
\btau=\btau(q_0,q,z).
$$
Here we understand $\llangle \frac{\phi_\alpha}{z(z-\psi)}\rrangle_{0,1}^\cX$ as a power series in $z^{-1}$. The mirror map is the coefficient of the $z^{-1}$-term in the expansion of $I^\tbT$:
\[
I^\tbT(q_0,q,z)=\one+\frac{\btau(q_0,q)}{z}+o(z^{-1}).
\]
\end{theorem}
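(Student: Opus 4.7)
The plan is to work within Givental's symplectic formalism. Let $\mathcal{H} = H^*_{\orb,\tbT}(\cX;\bC)\otimes_{\bC} \bC((z^{-1}))$ be equipped with the symplectic pairing $\Omega(f,g) = \Res_{z=0}(f(-z),g(z))_\tbT\, dz$, and let $\cL \subset \mathcal{H}$ denote the overruled Lagrangian cone packaging the genus-zero descendant Gromov--Witten theory of $\cX$. In this language the $J$-function of $\cX$ parametrizes a transverse Lagrangian slice of $\cL$: points of $\cL$ whose $z$-expansion begins $-z\one + \btau + O(z^{-1})$ are uniquely determined by $\btau \in H^*_{\orb,\tbT}(\cX;\bC)$. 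The theorem is thus equivalent to the assertion that the hypergeometric $I$-function $I^\tbT(q_0,q,z)$ lies on $\cL$ for every $(q_0,q)$ in the convergence domain, and that the mirror map $\btau(q_0,q)$ read off from its $z^{-1}$-expansion is the Gromov--Witten parameter at which this point of $\cL$ is the $J$-function.

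The main step is therefore to prove $I^\tbT \in \cL$. Two well-known strategies accomplish this. First, via quasi-map theory (Cheong--Ciocan-Fontanine--Kim \cite{CCK15}): one introduces the stack of $\epsilon$-stable quasi-maps from genus-zero twisted curves to $\cX$, verifies that it carries a $\tbT$-equivariant virtual class, defines a one-parameter family of ``$J^\epsilon$-functions'', and shows that $\epsilon$-wall-crossing preserves the Lagrangian cone of honest GW theory; the $\epsilon\to 0^+$ limit reproduces $I^\tbT$ from contributions of base points of the quasi-maps, while $\epsilon=\infty$ recovers the $J$-function itself. Alternatively, via toric graph-space localization (Coates--Corti--Iritani--Tseng \cite{CCIT15}): one works on $\overline{\cM}_{0,0}(\cX\times\bP^1,(d,1))$ with the auxiliary $\bC^*$-action rotating $\bP^1$; equivariant virtual localization decomposes the twisted $J$-function into contributions over $0,\infty\in\bP^1$; Givental's characterization of $\cL$ by recursion relations at fixed loci is then verified directly from the hypergeometric combinatorics of the stacky fan $\boSi$.

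Once $I^\tbT \in \cL$, the semi-positivity hypothesis (Assumption \ref{semi-positive}) guarantees an explicit mirror map. The hypergeometric factor attached to an effective class $d\in\bK_\eff$ contributes a leading $z$-power bounded by $-\langle \hat\rho, d\rangle$, and $\hat\rho \in \overline{\tC_\cX}$ together with $d\in\tNE_\cX$ forces $\langle \hat\rho,d\rangle \geq 0$, with equality only on a finite collection of classes. Hence only finitely many curve classes contribute to the $z^0$ and $z^{-1}$ coefficients, yielding $I^\tbT(q_0,q,z) = \one + \btau(q_0,q)/z + o(z^{-1})$ with $\btau(q_0,q) \in H^{\leq 2}_{\orb,\tbT}(\cX;\bC)$ a genuine change of variables (no Birkhoff factorization needed). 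Combined with the uniqueness of points of $\cL$ with prescribed leading asymptotic, this forces $I^\tbT(q_0,q,z)$ to coincide with $\sum_\alpha \llangle \phi_\alpha/(z(z-\psi))\rrangle^\cX_{0,1}\phi^\alpha$ evaluated at $\btau = \btau(q_0,q)$.

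The principal obstacle is placing $I^\tbT$ on the Givental cone, which is the content of \cite{CCIT15,CCK15} and admits no genuinely short proof: the quasi-map route needs the full $\epsilon$-wall-crossing machinery for twisted curves to orbifolds, while the graph-space route requires a careful equivariant virtual localization together with the verification of Givental's recursion relations. The remaining pieces---the semi-positivity estimate on $z$-powers extracted from the hypergeometric coefficients, and the uniqueness argument on $\cL$---are formal once the cone is in hand.
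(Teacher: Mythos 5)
You are reviewing a statement the paper does not prove: it is quoted verbatim (with attribution) from Coates--Corti--Iritani--Tseng \cite{CCIT15} and Cheong--Ciocan-Fontanine--Kim \cite{CCK15}, and the paper's only ``proof'' is that citation, with Assumption \ref{semi-positive} imposed precisely so that the mirror map is explicit. Your proposal is an accurate high-level outline of how those references establish the result --- place $I^\tbT$ on Givental's overruled Lagrangian cone (either by quasi-map $\epsilon$-wall-crossing or by graph-space localization plus the recursion/polynomiality characterization of the cone), then use the uniqueness of the point of the cone with prescribed $-z\one+\btau+O(z^{-1})$ asymptotics --- and you correctly concede that the heavy lifting admits no short proof and lives in the cited works, which is exactly the stance the paper takes. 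One inaccuracy worth fixing: the semi-positivity argument does not show that ``only finitely many curve classes contribute to the $z^0$ and $z^{-1}$ coefficients.'' Classes $d\in\bK_\eff$ with $\langle\hat\rho,d\rangle=0$ can form an infinite family (e.g.\ multiples of a curve class on which $c_1$ vanishes, as for a Hirzebruch-type surface), and the mirror map is in general an infinite power series in $q$ whose convergence is part of the cited analysis. What semi-positivity actually buys is that no positive powers of $z$ occur and that, with the paper's conventions (including the extended vectors $b_{r'+1},\dots,b_r$ and the factor $e^{(t_0+\sum_a t_ap_a^\tbT)/z}$), the expansion has the form $\one+\btau(q_0,q)/z+o(z^{-1})$ with $\btau\in H^{\leq 2}_{\orb,\tbT}$, so the mirror map is a genuine change of variables and no Birkhoff factorization is needed; your uniqueness argument then goes through unchanged. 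With that correction your write-up is a faithful summary of the external proof rather than an independent one, which is all the paper itself supplies.
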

Define $$\phi_{\sigma,v}=\frac{\iota_{\sigma,v *}1}{e_\tbT(T_{\fp_{\sigma,v}}\cIX)}.$$
By the equivariant Atiyah-Bott localization, for any class $\alpha \in H^*_{\orb,\tbT}(\cX;\bC)$,
$$
\alpha=\sum_{\sigma,v}(\iota_{\sigma,v}^*\alpha) \phi_{\sigma,v}.
$$
Let $\phi^{\sigma,v}$ be the dual basis to $\phi_{\sigma,v}$, and we have
$$
\phi^{\sigma,\inv(v)}=|G_\sigma|e_\tbT(T_{\fp_{\sigma,v}}\cIX)\phi_{\sigma,v}.
$$

\begin{theorem}
\label{thm:main}
For any $\bT$-equivariant coherent sheaf $\cE$ on $\cX$, we define $\kappa(\cE)=\tGamma_z(T\cX)\tch_z(\cE)$. Then for generic $w_i\in \bC, z>0$ and $0<q_a<\epsilon$ where some small $\epsilon>0$
\begin{align*}
I_A(\cE)=\int_{\Xi(\cE)}e^{-\frac{\tW}{z}}\Omega= \llangle\frac{\kappa(\cE)}{z(z+\psi)} \rrangle_{0,1}^{\cX}\vert_{\uw_i=w_i}.
\end{align*}
\end{theorem}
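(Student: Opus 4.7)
The plan is to reduce the statement to the case of a $\bT$-equivariant $\bQ$-ample line bundle, express the left-hand side via the explicit coefficient formula for $h_{\si,v}$ computed in the preceding proposition, recognize the resulting sum as an equivariant orbifold Poincar\'e pairing against $I^\tbT(q_0,q,-z)$ via Atiyah--Bott localization, and finally invoke the Coates--Corti--Iritani--Tseng / Cheong--Ciocan-Fontanine--Kim equivariant mirror theorem to identify the pairing with the desired descendant potential. Both sides of the asserted identity are additive on exact triangles in $D^b\Coh_\bT(\cX)$: the right-hand side by Equation~\eqref{eqn:class-additivity} together with $\bC$-linearity of Gromov--Witten invariants in the insertion, and the left-hand side by Proposition~\ref{prop:integral-additivity} combined with the same additivity of $\tch_z$. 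Since $\bT$-equivariant $\bQ$-ample line bundles generate $D^b\Coh_\bT(\cX)$, it is enough to treat $\cE = \cL_\vl$.

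Combining Proposition~\ref{prop:linear-combination} with the explicit formula for $h_{\si,v}$ yields
\[
I_A(\cL_\vl) = \sum_{\si,v} \frac{\iota_{\si,v}^*\bigl(\inv^*\kappa(\cL_\vl)\bigr)\,\cdot\,I_{\si,v}(q,-z)}{|G_\si|\,\iota_{\si,v}^* e_\tbT(T\cIX)}\bigg|_{\uw_i=w_i}.
\]
Using the identity $\inv\circ \iota_{\si,v} = \iota_{\si,\inv(v)}$ and the fact that $\cX_v$ and $\cX_{\inv(v)}$ coincide as substacks of $\cX$ (so the equivariant Euler classes of their tangent bundles at the fixed points agree), reindexing $v\mapsto \inv(v)$ rewrites the sum as
\[
\sum_{\si,v} \frac{\iota_{\si,v}^*\kappa(\cL_\vl)\,\cdot\,\iota_{\si,\inv(v)}^* I^\tbT(q_0,q,-z)}{|G_\si|\,\iota_{\si,v}^* e_\tbT(T\cIX)}\bigg|_{\uw_i=w_i}.
\]
This is exactly the equivariant Atiyah--Bott localization expansion of $\bigl(\kappa(\cL_\vl),\,I^\tbT(q_0,q,-z)\bigr)_\tbT = \int_{\cIX_\tbT}\kappa(\cL_\vl)\cdot \inv^*(I^\tbT(q_0,q,-z))$, so after setting $\uw_i = w_i$ one obtains $I_A(\cL_\vl) = \bigl(\kappa(\cL_\vl), I^\tbT(q_0,q,-z)\bigr)_\tbT$.

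To close the argument, I would substitute $z\to -z$ in the equivariant mirror theorem to obtain $I^\tbT(q_0,q,-z) = \sum_\alpha \llangle \phi_\alpha/(z(z+\psi))\rrangle^\cX_{0,1}\,\phi^\alpha$, expand $\kappa(\cL_\vl) = \sum_\alpha \bigl(\kappa(\cL_\vl), \phi^\alpha\bigr)_\tbT\,\phi_\alpha$ in the dual basis, and invoke $\bC$-linearity of the descendant potential in its insertion to conclude that $\bigl(\kappa(\cL_\vl),\,I^\tbT(q_0,q,-z)\bigr)_\tbT = \llangle \kappa(\cL_\vl)/(z(z+\psi))\rrangle^\cX_{0,1}$. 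The main obstacle is the bookkeeping around the involution $\inv$: the coefficient $h_{\si,v}$ from the preceding section carries $\inv^*$ on the $\kappa$-class, while the natural Atiyah--Bott localization of the pairing carries $\inv^*$ on the $I$-function factor. Reconciling these requires both the identity $\iota_{\si,v}^*\circ \inv^* = \iota_{\si,\inv(v)}^*$ and the equality of equivariant Euler classes $e_\tbT(T_{\fp_{\si,v}}\cIX) = e_\tbT(T_{\fp_{\si,\inv(v)}}\cIX)$, combined with careful tracking of the $\uw_i \mapsto w_i$ specialization and the domain $z>0,\,0<q_a<\epsilon$ on which both the oscillatory-integral decomposition and the mirror theorem are valid.
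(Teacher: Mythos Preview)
Your proposal is correct and follows essentially the same approach as the paper: reduce to line bundles, combine Proposition~\ref{prop:linear-combination} with the explicit $h_{\si,v}$ formula, and use the mirror theorem together with the $\inv$-bookkeeping you flagged. The only cosmetic difference is that the paper carries out the final identification by expanding in the fixed-point basis $\phi_{\si,v},\phi^{\si,v}$ (using $\phi^{\si,\inv(v)}=|G_\si|\,e_\tbT(T_{\fp_{\si,v}}\cIX)\,\phi_{\si,v}$) rather than naming the sum as the equivariant orbifold Poincar\'e pairing via Atiyah--Bott localization, but the underlying computation and the required Euler-class identity are the same.
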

\begin{proof}
We only need to prove this theorem when $\cE=\cL_\vl$ since line bundles generates the equivariant K-group. Since $$I^\tbT(q_0,q,-z)=\sum_{\sigma,v}(\iota_{\sigma,v}^* I^\tbT(q_0,q,-z))\phi_{\sigma,v},$$
we know that
$$
\llangle \frac{\phi^{\sigma,v}}{z(z+\psi)}\rrangle_{0,1}^\cX= \iota_{\sigma,v}^* I^\tbT(q_0,q,-z).
$$
Then
\begin{align*}
I_A(\cL_\vl)&=\sum_{\sigma,v} h_{\sigma,v}(z,w_i)\iota_{\sigma,v}^*I^\tbT(q_0,q,-z)|_{\uw_i=w_i}\\
&=\sum_{\sigma,v} \llangle \frac{\iota_{\sigma,v}^*(\inv^*(\tGamma_z(T\cX)\tch_z(\cL_\vl)))|_{\uw_i=w_i}\phi^{\sigma,v}}{|G_\sigma|(\iota_{\si,v}^*(e_\tbT(T\cIX)))_{\uw_i=w_i} z(z+\psi)}\rrangle_{0,1}^\cX|_{\uw_i=w_i}\\
&=\sum_{\sigma,v} \llangle \frac{\iota_{\sigma,v}^* (\inv^*(\tGamma_z(T\cX)\tch_z(\cL_\vl)))|_{\uw_i=w_i} \phi_{\sigma,\inv(v)}e_\tbT(T_{\fp_{\sigma,\inv(v)}}\cIX)}{(\iota_{\si,v}^*(e_\tbT(T\cIX)))_{\uw_i=w_i}z(z+\psi)}\rrangle_{0,1}^\cX \vert_{\uw_i=w_i}\\
&=\llangle\frac{\tGamma_z(T\cX)\tch_z(\cL_\vl)}{z(z+\psi)} \rrangle_{0,1}^\cX \vert_{\uw_i=w_i}.
\end{align*}
\end{proof}
\begin{remark}
\label{rmk:main}
By Remark \ref{rem:SYZ}, we have the following when $\cL$ is an ample line bundle on smooth $\cX$.
$$
\int_{\SYZ(\cL)} e^{-\tW/z}\Omega=\llangle\frac{\kappa(\cL)}{z(z+\psi)} \rrangle_{0,1}^\cX\vert_{\uw_i=w_i}.
$$
\end{remark}

\begin{bibdiv}
\begin{biblist}

\bibselect{mybib}

\end{biblist}
\end{bibdiv}

\end{document}